\numberwithin{equation}{section}
\newcommand{\Id}{\mathbbmss{1}}
\newcommand{\rmd}{\textnormal{d}}
\newcommand{\rmh}{\textnormal{h}}
\newcommand{\rmS}{\textnormal{S}}
\DeclareMathOperator{\End}{End}
\DeclareMathOperator{\rank}{rk}
\font\black=cmbx10 \font\sblack=cmbx7 \font\ssblack=cmbx5 \font\blackital=cmmib10  \skewchar\blackital='177
\font\sblackital=cmmib7 \skewchar\sblackital='177 \font\ssblackital=cmmib5 \skewchar\ssblackital='177
\font\sanss=cmss12 \font\ssanss=cmss8 
\font\sssanss=cmss8 scaled 600 \font\blackboard=msbm10 \font\sblackboard=msbm7 \font\ssblackboard=msbm5
\font\caligr=eusm10 \font\scaligr=eusm7 \font\sscaligr=eusm5  \font\fraktur=eufm10
\font\sfraktur=eufm7 \font\ssfraktur=eufm5 
\font\bsymb=cmsy10 scaled\magstep2
\def\all#1{\setbox0=\hbox{\lower1.5pt\hbox{\bsymb
       \char"38}}\setbox1=\hbox{$_{#1}$} \box0\lower2pt\box1\;}
\def\exi#1{\setbox0=\hbox{\lower1.5pt\hbox{\bsymb \char"39}}
       \setbox1=\hbox{$_{#1}$} \box0\lower2pt\box1\;}
\def\tx#1{{\fam0\relax#1}}
\def\sss#1{{\fam\ssfam\relax#1}}
\def\hpb#1{\setbox0=\hbox{${#1}$}
    \copy0 \kern-\wd0 \kern.2pt \box0}
\def\vpb#1{\setbox0=\hbox{${#1}$}
    \copy0 \kern-\wd0 \raise.08pt \box0}
\def\pmb#1{\setbox0\hbox{${#1}$} \copy0 \kern-\wd0 \kern.2pt \box0}
\def\pmbb#1{\setbox0\hbox{${#1}$} \copy0 \kern-\wd0
      \kern.2pt \copy0 \kern-\wd0 \kern.2pt \box0}
\def\pmbbb#1{\setbox0\hbox{${#1}$} \copy0 \kern-\wd0
      \kern.2pt \copy0 \kern-\wd0 \kern.2pt
    \copy0 \kern-\wd0 \kern.2pt \box0}
\def\pmxb#1{\setbox0\hbox{${#1}$} \copy0 \kern-\wd0
      \kern.2pt \copy0 \kern-\wd0 \kern.2pt
      \copy0 \kern-\wd0 \kern.2pt \copy0 \kern-\wd0 \kern.2pt \box0}
\def\pmxbb#1{\setbox0\hbox{${#1}$} \copy0 \kern-\wd0 \kern.2pt
      \copy0 \kern-\wd0 \kern.2pt
      \copy0 \kern-\wd0 \kern.2pt \copy0 \kern-\wd0 \kern.2pt
      \copy0 \kern-\wd0 \kern.2pt \box0}
\mathchardef\za="710B  
\mathchardef\zb="710C  
\mathchardef\zg="710D  
\mathchardef\zd="710E  
\mathchardef\zve="710F 
\mathchardef\zz="7110  
\mathchardef\zh="7111  
\mathchardef\zvy="7112 
\mathchardef\zi="7113  
\mathchardef\zk="7114  
\mathchardef\zl="7115  
\mathchardef\zm="7116  
\mathchardef\zn="7117  
\mathchardef\zx="7118  
\mathchardef\zp="7119  
\mathchardef\zr="711A  
\mathchardef\zs="711B  
\mathchardef\zt="711C  
\mathchardef\zu="711D  
\mathchardef\zvf="711E 
\mathchardef\zq="711F  
\mathchardef\zc="7120  
\mathchardef\zw="7121  
\mathchardef\ze="7122  
\mathchardef\zy="7123  
\mathchardef\zf="7124  
\mathchardef\zvr="7125 
\mathchardef\zvs="7126 
\mathchardef\zf="7127  
\mathchardef\zG="7000  
\mathchardef\zD="7001  
\mathchardef\zY="7002  
\mathchardef\zL="7003  
\mathchardef\zX="7004  
\mathchardef\zP="7005  
\mathchardef\zS="7006  
\mathchardef\zU="7007  
\mathchardef\zF="7008  
\mathchardef\zW="700A  
\mathchardef\zC="7009  
\newcommand{\be}{\begin{equation}}
\newcommand{\ee}{\end{equation}}
\newcommand{\bea}{\begin{eqnarray}}
\newcommand{\eea}{\end{eqnarray}}
\def\*{{\textstyle *}}
\newcommand{\R}{{\mathbb R}}
\newcommand{\N}{{\mathbb N}}
\newcommand{\s}{{\textstyle *}}
\newcommand{\A}{{\cal A}}
\def\Sec{\sss{Sec}}
\def\Vect{\sss{Vect}}
\def\sT{{\sss T}}
\def\sv{{\sss v}}
\def\xi{\tx{i}}
\def\s*{{\scriptstyle *}}
\def\cE{\mathcal{E}}
\title{Representations up to homotopy from weighted Lie algebroids}                                     
\author{Andrew James Bruce, Janusz Grabowski and Luca Vitagliano}                 
\keywords{graded manifolds, Lie algebroids, Lie groupoids, representations up to homotopy}         
\address{%
Andrew James Bruce\\            
Mathematics Research Unit\\
University of Luxembourg \\
Maison du Nombre 6\\
avenue de la Fonte\\
L-4364, Esch-sur-Alzette,  Luxembourg\\
andrewjamesbruce@googlemail.com}
\address{%
Janusz Grabowski\\ 
Institute of Mathematics\\
Polish Academy of Sciences\\
\'Sniadeckich 8 \\
00-656 Warszawa, Poland
\\ jagrab@impan.pl}
\address{%
Luca Vitagliano\\ 
Department of Mathematics\\
Universit\`{a} degli Studi di Salerno\\
Via Giovanni Paolo II n. 123\\
84084 Fisciano (SA), Italy\\
 lvitagliano@unisa.it}
\begin{document}

\title{Representations up to homotopy\\ from weighted Lie algebroids}

\maketitle

\begin{abstract}
Weighted Lie algebroids were recently introduced as Lie algebroids equipped with an additional compatible non-negative grading, and represent a wide generalisation of the notion of a $\mathcal{VB}$-algebroid. There is a close relation between two term representations up to homotopy of Lie algebroids and $\mathcal{VB}$-algebroids. In this paper we show how this relation generalises to weighted Lie algebroids and in doing so we uncover new and natural examples of higher term representations up to homotopy of Lie algebroids. Moreover, we show how the van Est theorem generalises to weighted objects.
\end{abstract}

\section{Introduction}\label{sec:Intro}
Lie groupoids are an integral part of  differential geometry. Generically, these objects encode quite a general notion of a symmetry and as such can be found in various guises throughout modern  geometry.  Lie algebroids are the infinitesimal counterpart of Lie groupoids, and include as extreme examples Lie algebras and tangent bundles. Our general reference for the theory of Lie groupoids and Lie algebroids is the book by Mackenzie \cite{Mackenzie2005}.\par
It is known, thanks to the work of  Gracia-Saz \& Mehta \cite{Gracia-Saz:2009},  that $\mathcal{VB}$-algebroids, i.e., vector bundle objects in the category of Lie algebroids, are the natural geometric setting  for two term representations up to homotopy (cf. Arias Abad \& Crainic \cite{Abad:2012}).
 Weighted Lie algebroids \cite{Bruce:2014b,Bruce:2015,Bruce:2016} are a far reaching generalisation of $\mathcal{VB}$-algebroids and so it is natural to wonder what the link between  weighted Lie algebroids  and representations  up to homotopy of Lie algebroids is. In this paper we clarify these links and in doing so open up many natural examples of representations up to homotopy of Lie algebroids. \par
We will show that associated with any weighted Lie algebroid is a series of canonical Lie algebroid modules (in the sense of Va\u{\i}ntrob \cite{Vaintrob:1997})  over the underlying weight zero Lie algebroid. Moreover, to construct these Lie algebroid modules one does not need a splitting of the weighted Lie algebroid -- we really do have a canonical construction. Using the results of Mehta \cite{Mehta:2014}, we know that a Lie algebroid module is (up to  non-canonical choice) equivalent to a representation up to homotopy of the Lie algebroid. From this perspective, we find that the notion of a Lie algebroid module is the most natural definition of a generalised representation of a Lie algebroid.\par
Moving on to the global counterparts, weighted Lie groupoids were first defined and studied in \cite{Bruce:2015} and offer a wide generalisation of the notion of a $\mathcal{VB}$-groupoid, i.e.~a vector bundle object in the category of Lie groupoids . There has been some renewed interest in $\mathcal{VB}$-groupoids. In particular, Cabrera \& Drummond \cite{Cabrera:2016} proved a version of the van Est theorem (see Crainic \cite{Crainic:2003}) for homogeneous cochains of a $\mathcal{VB}$-groupoid. As we shall show, a refined version of the van Est theorem holds for weighted Lie groupoids, and in fact follows from minor adjustments to the ideas and proofs presented by  Cabrera \& Drummond.

\smallskip

\noindent \textbf{Arrangement:}   In Section \ref{sec:preliminaries} we recall the notions of a graded bundle, Lie algebroid modules and representations up to homotopy. The informed reader may safely skip these preliminaries.  In Section \ref{sec:WeightedAlgebroids} we present the bulk of this paper and explain how to associate in a canonical way a series of Lie algebroid modules to a given weighted Lie algebroid. We conclude this paper in  Section \ref{sec:VanEst} where we present a version of the  van Est theorem  for weighted Lie groupoids.

\section{Preliminaries}\label{sec:preliminaries}
\subsection{Graded bundles and homogeneity structures}
In this paper we will focus on a particular class of graded manifolds known as \emph{graded bundles} (see \cite{Grabowski:2013,Grabowski:2012}). There is a more general notion of \emph{graded (super)manifolds}  in the spirit of Th.~Voronov \cite{Voronov:2001}  (see also \cite{Roytenberg:2001} and \cite{Severa:2005}). However,  such supermanifolds will not be employed in the main body of this paper. Canonical examples of graded bundles  include vector bundles and higher tangent bundles. The grading on the structure sheaf of a graded bundle is equivalent to a smooth action of the multiplicative monoid of reals
$$\rmh : \mathbb{R} \times F \rightarrow F,$$
which is known as a \emph{homogeneity structure}. Recall that in this context a function $f \in C^{\infty}(F)$ is homogeneous of weight $w$ if and only if
$$\rmh_t^*f = t^wf,$$
for all $t \in \R$. We call $w$  the \emph{weight} of  $f$. Grabowski \& Rotkiewicz \cite{Grabowski:2012}  proved that only \emph{non-negative integer weights} are allowed, so the algebra $\mathcal{A}(F)\subset C^\infty(F)$ spanned by homogeneous functions has a canonical $\mathbb{N}$-grading, $\mathcal{A}(F) = \bigoplus_{i \in \mathbb{N}}\mathcal{A}^{i}(F)$. This algebra  is referred to as the \emph{algebra of polynomial functions} on $F$. \par
Importantly, we have  that for $t \neq 0$ the action $\rmh_{t}$ is a diffeomorphism of $F$ and when $t=0$ it is a surjective submersion $\tau=\rmh_0$ onto a submanifold $F_{0}=M$, with the fibres being diffeomorphic to $\mathbb{R}^{N}$ for some $N$  (cf.  \cite{Grabowski:2012}). Moreover, the objects obtained are particular kinds of \emph{polynomial bundles} $\tau:F\to M$, i.e. fibrations which locally look like $U\times\R^N$ and the change of coordinates (for a certain choice of an atlas) are polynomial in $\R^N$. For the dual picture see \cite{Grabowski:2016}. A homogeneity structure is said to be \emph{regular} if and only if
\begin{equation*}
\left.\frac{\rmd }{\rmd t}\right|_{t=0}\rmh_{t}(p) = 0  \hspace{20pt} \Longrightarrow \hspace{20pt} p = \rmh_{0}(p),
\end{equation*}
 for all points $p \in F$ (see \cite{Grabowski:2009}). Moreover, if the homogeneity structure is regular then we have a vector bundle structure,  i.e.~$\rmh$ is the scalar multiplication in the fibres of a vector bundle $F \to F_0$ . The converse is also true, i.e.~vector bundles are graded bundles whose homogeneity structure is regular.\par
We can always employ homogeneous local coordinates of the form $(x^{a}, y^{\alpha}_{w})$, where $(x^{a})$ form a (weight zero)  coordinate system on $M := F_0$ and the fibre coordinates are labelled by their  (positive) weight $w \in \mathbb{N}$ \cite{Grabowski:2012}  (for the case of supermanifolds consult \cite{Jozwikowski:2016}).  Changes of coordinates must respect the weight and are polynomial in the non-zero weight coordinates. The weight of the highest weight coordinates we refer to as the \emph{degree} of the graded bundle.  For instance, vector bundles are exactly degree 1 graded bundles.  A graded bundle  of degree $k$  admits a sequence of  polynomial fibrations  $F \to F_i$, where a point of $F_i$ is a class of the points of $F$ described  in a  homogeneous  coordinate system by the coordinates of weight $\leq i$, with the obvious tower of  surjections
\begin{equation}\label{eqn:fibrations}
F=F_{k} \stackrel{\tau^{k}}{\longrightarrow} F_{k-1} \stackrel{\tau^{k-1}}{\longrightarrow}   \cdots \stackrel{\tau^{3}}{\longrightarrow} F_{2} \stackrel{\tau^{2}}{\longrightarrow}F_{1} \stackrel{\tau^{1}}{\longrightarrow} F_{0} = M,
\end{equation}
where the coordinates on $M$ have zero weight. Note that  $F_{1} \rightarrow M$ is a \emph{linear fibration}, and so we have a vector bundle structure,  and that the other fibrations $F_{l} \rightarrow F_{l-1}$ are \emph{affine fibrations} in the sense that the changes of local coordinates for the fibres are linear plus additional additive terms of appropriate weight.
\begin{proposition}[\cite{Bruce:2016}]\label{prop:split}
An arbitrary  graded bundle $F_k$  is non-canonically isomorphic to a split graded bundle, i.e., a Whitney sum of vector bundles  with shifted degrees in the fibers.
\end{proposition}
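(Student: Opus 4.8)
The plan is to proceed by induction on the degree $k$. The cases $k=0$, where $F_0=M$ is just the base (an empty Whitney sum), and $k=1$, where a graded bundle is precisely a vector bundle $E_1\to M$, already split as $E_1[1]$, are immediate. So assume every graded bundle of degree $<k$ is non-canonically isomorphic to a split one, and let $F=F_k$ be of degree $k$ with its tower of fibrations \eqref{eqn:fibrations}. By the inductive hypothesis there is a graded bundle isomorphism $F_{k-1}\cong E_1[1]\oplus_M\cdots\oplus_M E_{k-1}[k-1]$ for suitable vector bundles $E_i\to M$. It therefore suffices to construct a vector bundle $E_k\to M$ together with a graded bundle isomorphism $F_k\cong F_{k-1}\oplus_M E_k[k]$: composing this with the isomorphism $F_{k-1}\oplus_M E_k[k]\cong E_1[1]\oplus_M\cdots\oplus_M E_k[k]$ furnished by the inductive hypothesis then completes the induction.

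For the inductive step I would first analyse the top fibration $\tau^k\colon F_k\to F_{k-1}$, which by the discussion preceding the proposition is an affine fibration. In homogeneous coordinates $(x^a, y^\alpha_1,\dots,y^\alpha_{k-1},y^\alpha_k)$ adapted to the tower, the top coordinates transform by $\tilde y^\alpha_k = A^\alpha_\beta\, y^\beta_k + b^\alpha$, where $b^\alpha$ is a weight-$k$ polynomial in the coordinates of weight $<k$ --- hence a function on $F_{k-1}$ --- and, since $y_k$ and $\tilde y_k$ both carry weight $k$, the matrix $A^\alpha_\beta$ has weight $0$ and therefore depends only on the $x^a$. Consequently the vector bundle $\widehat F_k\to F_{k-1}$ modelling this affine fibration has transition functions pulled back from $M$, so $\widehat F_k=\pi^{\ast}E_k$ for a genuine vector bundle $E_k\to M$, where $\pi\colon F_{k-1}\to M$ denotes the composite projection of the tower. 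Placing the fibre coordinates of $E_k$ in weight $k$ makes $\widehat F_k$ a graded bundle of degree $k$, and by construction $\widehat F_k\cong F_{k-1}\oplus_M E_k[k]$.

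The crux is then to choose a section $\sigma\colon F_{k-1}\to F_k$ of $\tau^k$ that intertwines the homogeneity structures, i.e.\ $\sigma\circ\rmh^{F_{k-1}}_t=\rmh^{F_k}_t\circ\sigma$ for all $t$. For this, take a locally finite cover $\{U_\lambda\}$ of $M$ by domains of homogeneous coordinates, let $\sigma_\lambda$ be the local section ``$y_k=0$'' of $\tau^k$ over $\pi^{-1}(U_\lambda)$ --- which is manifestly homogeneous --- and let $\{\rho_\lambda\}$ be a partition of unity on $M$ subordinate to $\{U_\lambda\}$, pulled back along $\pi$ (hence of weight $0$). Since any two sections of an affine bundle differ by a section of its model vector bundle, the combination $\sigma:=\sum_\lambda(\rho_\lambda\circ\pi)\,\sigma_\lambda$ is a well-defined global section, and it is homogeneous because it is a weight-$0$-coefficient combination of homogeneous sections. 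Such a $\sigma$ yields the standard affine trivialisation $\Phi\colon F_k\to\widehat F_k$, $p\mapsto\big(\tau^k(p),\,p-\sigma(\tau^k(p))\big)$, a diffeomorphism; and a short weight computation --- using that $\sigma$ is homogeneous and that $\rmh^{F_k}_t$ restricts on the affine fibres to an affine map with linear part multiplication by $t^k$ --- shows that $\Phi$ intertwines the homogeneity structures of $F_k$ and $\widehat F_k$. Hence $F_k\cong\widehat F_k\cong F_{k-1}\oplus_M E_k[k]$, closing the induction.

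I expect the genuinely delicate point to be exactly the construction of the \emph{homogeneous} section $\sigma$. That an affine bundle over a paracompact base admits global sections is standard (a partition-of-unity argument, or the vanishing of $H^1$ with coefficients in the fine sheaf of sections of the model bundle); the extra care needed here is to run this argument through data that already respects the grading --- local zero sections in homogeneous charts and a partition of unity pulled back from $M$ --- so that the resulting section is weight-compatible. The remaining ingredients, namely identifying the model vector bundle as a pullback from $M$ and verifying that $\Phi$ is a morphism of graded bundles, are routine bookkeeping with weights once the coordinate forms of the affine transition functions and of $\rmh_t$ are written down.
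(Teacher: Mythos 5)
Your argument is correct. Note that the paper itself offers no proof of this proposition --- it is quoted from [Bruce:2016] (see also [Bruce:2018] for the filtered-bundle generalisation) --- and your strategy is essentially the one used there: induct on the degree, observe that the model vector bundle of the top affine fibration $\tau^k\colon F_k\to F_{k-1}$ is a pullback $\pi^*E_k$ from $M$ because the linear part of the weight-$k$ transition functions has weight $0$, and split $\tau^k$ by a homogeneity-equivariant section glued from local zero sections with a weight-$0$ partition of unity pulled back from $M$. You have also correctly identified the only delicate point, namely that the affine combination $\sum_\lambda(\rho_\lambda\circ\pi)\sigma_\lambda$ remains homogeneous, which holds because $\rmh_t$ acts fibrewise affinely and the coefficients are $\rmh_t$-invariant.
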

To be a little more specific,  the model vector bundle $\sv(F_i)$ of the affine bundle $\tau^i : F_i \rightarrow F_{i-1}$ turns out to be the pull-back of a vector bundle $\bar\sv(F_i) \to M$ over $M$ (cf. \cite{Bruce:2018}) and
we have an isomorphism
$$F_k \cong \bar\sv(F_k) \oplus_M \bar\sv(F_{k-1}) \oplus_M \dots \oplus_M \bar\sv(F_1) \,,$$
We stress that the isomorphism here is far from unique and nearly never canonical. \par
\begin{example} The principal canonical example of a graded bundle is the \emph{higher tangent bundle}; i.e. the manifold of $k$-th jets (at zero) of curves $\gamma: \mathbb{R} \rightarrow M$ which we denote by $\sT^{k}M$.  Furthermore, it is well known that  $\sT^{k}M$ is non-canonically isomorphic to $\underbrace{\sT M \oplus_M \cdots \oplus_M \sT M}_{k \textnormal{ times}}$ via a choice of an affine connection on $M$. 
\end{example}
\begin{example}
The previous example should \emph{not} be confused with the \emph{iterated tangent bundle} which we denote by $\sT^{(k)}M$ and is defined via application of the tangent functor $k$-times. $\sT^{(k)}M$ is naturally a multi-graded bundle where each homogeneity structure is associated with vector bundle structures $\sT^{(l)}M \rightarrow \sT^{(l-1)}M$, for $1 \leq l \leq k$. Here we define $\sT^{(0)}M := M$. By passing to total weight, i.e., summing the individual components of the multi-weight, one obtains a graded bundle of degree $k$.
\end{example}

\subsection{Recollection of Lie algebroids}
Recall the notion of a Lie algebroid as a vector bundle $\pi:A \rightarrow M$ equipped with a Lie bracket on the sections $[-, -]: \Sec(A) \times \Sec(A) \rightarrow \Sec(A)$ together with an anchor,  i.e.~a vector bundle morphism $\rho : A \to \sT M$, inducing a $C^\infty (M)$-linear map $\Sec (A) \to \Vect (M)$, also denoted by $\rho$ , that satisfy the Leibniz rule
\begin{equation*}
 [u,fv] = \rho(u)f \: v  +   f [u,v],
\end{equation*}
\noindent for all $u,v \in \Sec(A)$ and $f \in C^{\infty}(M)$. The Leibniz rule implies that the anchor is actually a Lie algebra morphism: $\rho\left([u,v]\right) = [\rho(u), \rho(v)]$.  A Lie algebroid is a triple $(A, [-,-], \rho)$, however we will typically denote a Lie algebroid simply as $A$ when no confusion can arise.\par
If we pick some local basis for the sections $\{s_i\}$, then the structure functions of a Lie algebroid are defined by
\begin{equation}\label{Q}
[s_{i} , s_{j}] = Q_{ij}^{k}(x)s_{k}\,,\quad
\rho(s_{i})= Q_{i}^{a}(x)\frac{\partial}{\partial x^{a}}\,,
\end{equation}
\noindent and satisfy the  \emph{Lie algebroid structure equations}
\begin{align*}
&Q_{ij}^{k} + Q_{jk}^{i} =0,&
& Q_{i}^{a}\frac{\partial Q_{j}^{b}}{\partial x^{a}} - Q_{j}^{a}\frac{\partial Q_{i}^{b}}{\partial x^{a}}=0,&
& \sum_{\textnormal{cyclic}}\left( Q_{i}^{a}\frac{\partial Q_{jk}^{l}}{\partial x^{a}} - Q_{im}^{l}Q_{jk}^{m} \right)=0.&
\end{align*}
The two extreme examples of Lie algebroids are of course tangent bundles and Lie algebras. In a sense, general examples are a mixture of these two fundamental examples and include integrable distributions, Lie algebra bundles and the cotangent bundle of a Poisson manifold. The reader may consult Mackenzie \cite{Mackenzie2005} for further examples.\par
Given any Lie algebroid $A$, there is a differential $\rmd_A$ defined on the graded algebra of  \emph{Lie algebroid forms}, $ \Omega^{\bullet}(A) := \Sec\big(\bigwedge^\bullet A^* \big)$, often referred to as  $A$-forms  for brevity.  To be precise, given any Lie algebroid the  \emph{de Rham differential}
$$\rmd_A : \Omega^{k}(A) \longrightarrow \Omega^{k+1}(A)$$
is defined as
\begin{eqnarray*}
 \rmd_A\omega(X_0, \dots ,X_k ) &=& \sum_{i=0}^k (-1)^i \rho(X_i) \: \omega(X_0, \dots, \widehat{X_i} , \dots , X_k)\\
  & & + \sum_{i<j} (-1)^{i+j}\omega([X_i , X_j], X_0, \dots , \widehat{X_i} , \dots, \widehat{X_j}, \dots X_k),
\end{eqnarray*}
where $\omega \in \Omega^k(A)$ and $X_0, \dots , X_k \in \Sec(A)$. The Lie algebroid structure equations can be shown to be directly equivalent to $\rmd_A^2=0$, and so we do obtain a differential in this way. For tangent bundles, the resulting differential is the standard de Rham differential and for Lie algebras we have the Chevalley--Eilenberg differential.
\begin{Definition}
The \emph{Chevalley--Eilenberg--de Rham cochain complex} associated  with a Lie algebroid $(A, [-,-], \rho)$ is the  cochain complex of Lie algebroid forms $\big ( \Omega^{\bullet}(A), \: \rmd_{A} \big)$. The resulting cohomology is known as the \emph{standard Lie algebroid cohomology}.
\end{Definition}
In adapted local coordinates $(x^a, y^i)$ on $A$, Lie algebroid forms can be written as
$$\omega = \sum_{n=0}^q \frac{1}{n!} y^{i_1}\wedge \dots \wedge y^{{i_n}}\omega_{i_n \dots i_1}(x).$$
 Here we interpret a fiber-wise linear function on $A$, e.g.~linear fibre coordinates $y^i$ as degree $1$ A-forms. We will adopt the same interpretation for every vector bundle in the sequel without further comments.  We can describe the differential $\rmd_A$ locally  using
\begin{align*}
& \rmd_A x^a  = y^iQ_i^a(x), && \rmd_A y^k = \frac{1}{2}y^i \wedge y^j Q_{ji}^k(x),
\end{align*}
together with the Leibniz rule.\par
Suppose we are given two Lie algebroids $A_1$ and $A_2$. Then a \emph{morphism} from $A_1$ to $A_2$ is a vector bundle morphism $\phi : A_1 \rightarrow A_2$, such that 
$\phi^*\circ\rmd_{A_2}=\rmd_{A_1}\circ\phi^*$, where $\phi^*:\Omega^\bullet(A_2)\rightarrow \Omega^\bullet(A_1)$
is the map induced by $\phi$ on `forms'.

\subsection{Lie algebroid modules and representations up to homotopy}
Our general understanding of Lie algebroid modules comes from  Va\u{\i}ntrob \cite{Vaintrob:1997} who formulated them in terms of supermanifolds. Here we adapt the definition to a more classical picture in terms of homological algebra. In our description the correspondence between Lie algebroid modules and representations up to homotopy will be obvious. Before we go on, we note that the  graded vector bundles $\mathcal{V} = \oplus_i \mathcal{V}_i$ used in the following definitions are generally $\mathbb Z$-graded and \emph{not} just $\mathbb N$-graded. For simplicity, we will insist that the graded vector bundles are \emph{degree bounded}, though this restriction need not be enforced in general. To explain this, we denote the rank of $\Sec(\mathcal{V})$ in degree $i$ as  $\rank_i(\mathcal{V})$. Then we say that $\mathcal{V}$ is degree bounded if there exist integers $m,n$ such that $\rank_i(\mathcal{V}) =0$ if $i<m$ or $i>n$.
\begin{Definition}\label{def:LieModule}
Let $\big(A, [-,-], \rho  \big)$ be a Lie algebroid and let $\big(  \Omega^\bullet(A), \rmd_A\big)$ be the associated Chevalley--Eilenberg--de Rham cochain complex. Then a \emph{Lie algebroid module} over $A$ (an \emph{$A$-module} for brevity) is a differential graded $(\Omega^\bullet (A), \rmd_A)$-module, i.e.~a cochain complex $\big(V, \rmd_V \big)$ where
\begin{enumerate}
\item $V$ is a graded $\Omega^\bullet (A)$-module;
\item $\rmd_V$ satisfies the Leibniz rule
$$\rmd_V(\omega \cdot v) = \rmd_A \omega \cdot v + (-1)^{\deg(\omega)} \omega \cdot \rmd_V v,$$
for all (homogeneous) $\omega \in \Omega^\bullet(A)$ and $v \in V$

\item as a $\Omega^\bullet (A)$-module, $V$ is isomorphic to $\Omega^\bullet(A)\otimes_{C^\infty(M)} \Sec(\mathcal{V})$, where $\mathcal{V}$ is a graded vector bundle over $M$. Note that we do not insist that this isomorphism be fixed.
\end{enumerate}
\end{Definition}
\begin{Definition}
 Let $(V_1, \rmd_{V_1}) $ and  $(V_2, \rmd_{V_2}) $ be Lie algebroid modules over the same Lie algebroid $A$. Then a \emph{morphism of $A$-modules} is an $\Omega^\bullet (A)$-linear cochain map $\varphi : V_1 \rightarrow V_2 $, i.e., $\varphi (\omega \cdot v) = \omega \cdot \varphi (v)$, and $\varphi\circ \rmd_{V_1} = \rmd_{V_2} \circ \varphi$.
\end{Definition}
The notion of an automorphisms of a Lie algebroid module and the resulting isomorphism classes is clear.
Recall that a representation up to homotopy of a Lie algebroid $A \rightarrow M$ is a cochain complex $(\mathcal{V}, \partial)$ of vector bundles over $M$ equipped with an $A$-connection and maps $\omega_{i} : \bigwedge^{i}\Sec(A) \rightarrow \End^{1-i}(\mathcal{V})$ for $i \geq 2$ that satisfy some series of coherence conditions. The first of these conditions stated that $\omega_{2}$ generates the chain homotopies controlling the curvature of the $A$-connection $\omega_1$. Very loosely, the connection is `flat up to homotopy'. From the start of the subject, it was realised that superconnections and Lie algebroid modules provide an economical and precise way to describe representations up to homotopy of Lie algebroids. The definition we take, as originally given by Arias Abad \& Crainic \cite{Abad:2012} is:
\begin{Definition}\label{def:RepHomot}
Let $\big(A, [-,-], \rho \big)$ be a Lie algebroid and let $\mathcal{V} = \bigoplus \mathcal{V}_{i}$ be a graded vector bundle over $M$. Furthermore, let us denote the space of $\mathcal{V}$-valued $A$-forms as
$$\Omega(A, \mathcal{V}) :=  \Omega^\bullet(A) \otimes_{C^{\infty}(M)}\Sec(\mathcal{V}).$$
Then a \emph{representation up to homotopy} (a.k.a.~an $\infty$-representation) of $A$ on $\mathcal{V}$ is a (total) degree $1$ operator $\mathcal{D}$ on $\Omega(A, \mathcal{V})$ that squares to zero, i.e., $\mathcal{D}^2 =0$, and satisfied the Leibniz rule
$$\mathcal{D}(\omega \:\mathsf{v}) = \rmd_{A}(\omega) \: \mathsf{v} + (-1)^{\deg(\omega)}\omega \: \mathcal{D}(\mathsf{v}),$$
for any and all $\omega \in \Omega^\bullet(A)$ and $\mathsf{v} \in \Omega(A, \mathcal{V})$  (in other words $(\Omega^\bullet (A, \mathcal V), \mathcal D)$ is a differential graded $(\Omega^\bullet (A), \rmd_A)$-module). The operator $\mathcal{D}$ will be referred to as a \emph{flat superconnection}.
\end{Definition}
We will need the notion of representations up to homotopy being \emph{gauge equivalent} as this sets up the notion of the relevant isomorphism classes. Note that we have a natural projection map
$$\varrho : \Omega(A, \mathcal{V}) \longrightarrow \Sec(\mathcal{V}),$$
the kernel of which is $\bigoplus_{p>0} \Omega^p(A)\otimes_{C^{\infty}(M)}\Sec(\mathcal{V})$. Using this projection we can define gauge transformation.
\begin{Definition}
A \emph{gauge transformation} of $\Omega(A, \mathcal{V})$ is a degree preserving $\Omega^\bullet(A)$-module automorphism $\varphi$ that acts as the identity on $\Sec(\mathcal{V})$, i.e., $\varrho \circ \varphi = \Id_\mathcal{V} \circ \varphi$. Under a gauge transformation the flat superconnection transforms as $\mathcal{D}^\varphi := \varphi^{-1} \circ \mathcal{D} \circ \varphi$.
 \end{Definition}
Two flat superconnections are said to be \emph{gauge equivalent} if they are related by  a gauge transformation. Naturally, isomorphism classes of representations up to homotopy are identified  by collections of gauge equivalent flat superconnections.\par

From the definitions it is almost obvious that Lie algebroid modules are essentially equivalent to representations up to homotopy. Once an isomorphism as $\Omega^\bullet (A)$-modules $V \rightarrow \Omega(A, \mathcal{V})$ as been chosen one can then equip $\Omega(A, \mathcal{V})$ with a flat superconnection.  Different choices of this isomorphism lead to gauge equivalent representations up to homotopy. As far as we know, this essential equivalence was first clearly stated by Mehta \cite[Theorem 4.5]{Mehta:2014}, albeit using supermanifolds.
\begin{theorem}[Mehta]\label{thm:Mehta}
Let $(A, [-,-], \rho)$ be a Lie algebroid. There is a one-to-one correspondence between isomorphism  classes  of $A$-modules and  isomorphism  classes  of  representations up to homotopy of $A$.
\end{theorem}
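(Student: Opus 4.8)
The plan is to establish the correspondence by providing explicit constructions in both directions and then checking that they descend to isomorphism classes and are mutually inverse. Given a representation up to homotopy $(\mathcal V, \mathcal D)$, the pair $\big(\Omega(A,\mathcal V), \mathcal D\big)$ is literally an $A$-module in the sense of Definition \ref{def:LieModule}: condition (3) holds with $V = \Omega(A,\mathcal V)$ and the canonical identification, condition (2) is the Leibniz rule built into Definition \ref{def:RepHomot}, and condition (1) is automatic. Conversely, given an $A$-module $(V, \rmd_V)$, property (3) furnishes \emph{some} isomorphism $\psi : V \xrightarrow{\ \sim\ } \Omega(A,\mathcal V)$ of $\Omega^\bullet(A)$-modules; transporting $\rmd_V$ along $\psi$ yields an operator $\mathcal D := \psi\circ \rmd_V \circ \psi^{-1}$ on $\Omega(A,\mathcal V)$ which one checks is degree $1$, squares to zero, and satisfies the Leibniz rule (because $\psi$ is $\Omega^\bullet(A)$-linear and $\rmd_V$ satisfies (2)), hence is a flat superconnection.

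The first substantive step is to show this latter construction is independent of the choice of $\psi$ \emph{up to gauge equivalence}. If $\psi'$ is another such isomorphism, then $\varphi := \psi'\circ\psi^{-1}$ is a degree-preserving $\Omega^\bullet(A)$-module automorphism of $\Omega(A,\mathcal V)$; it need not act as the identity on $\Sec(\mathcal V)$, so it is not a gauge transformation on the nose. Here I would invoke the standard fact that any degree-preserving $\Omega^\bullet(A)$-module automorphism of $\Omega(A,\mathcal V)$ factors as $\varphi = \varphi_0 \circ \varphi_+$, where $\varphi_0$ is induced by a vector-bundle automorphism of $\mathcal V$ (its "degree-zero part", i.e. the composite $\varrho\circ\varphi$ restricted to $\Sec(\mathcal V)$) and $\varphi_+$ is a genuine gauge transformation (equal to identity modulo positive $A$-form degree); this uses invertibility of $\varphi_0$, which holds because $\Omega^\bullet(A)$ is positively graded so the "strictly-positive" part is nilpotent on degree-bounded $\mathcal V$. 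Modding out by the $\varphi_0$'s is exactly passing to isomorphism classes of representations up to homotopy, while the $\varphi_+$'s are precisely gauge transformations. This shows the assignment $[(V,\rmd_V)] \mapsto [(\mathcal V, \mathcal D)]$ is well defined on isomorphism classes.

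Next I would verify the two assignments are mutually inverse at the level of isomorphism classes. Starting from $(\mathcal V,\mathcal D)$, forming the $A$-module $(\Omega(A,\mathcal V),\mathcal D)$, and then choosing the canonical isomorphism returns $(\mathcal V,\mathcal D)$ exactly; starting from $(V,\rmd_V)$, transporting along $\psi$ and then reading off the associated $A$-module gives $(\Omega(A,\mathcal V),\mathcal D)$, which is isomorphic to $(V,\rmd_V)$ via $\psi$ itself. Finally one checks functoriality enough to see that isomorphic $A$-modules produce gauge-equivalent (hence isomorphic) representations up to homotopy and conversely: an isomorphism of $A$-modules intertwines the differentials and the $\Omega^\bullet(A)$-module structures, so after transporting it becomes an $\Omega^\bullet(A)$-module automorphism of $\Omega(A,\mathcal V)$ intertwining the two superconnections, and by the factorization above its gauge part and vector-bundle part both preserve the relevant equivalence.

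The main obstacle is the factorization argument in the second paragraph: making precise that an arbitrary $\Omega^\bullet(A)$-module automorphism preserving total degree splits as a vector-bundle automorphism composed with a gauge transformation, and that this is compatible with conjugation of superconnections. This is where the degree-boundedness hypothesis on $\mathcal V$ is used (to ensure the strictly-positive-degree component is nilpotent, so that geometric-series type inverses converge), and it is the point where one must be careful that "isomorphism class of representation up to homotopy" has been set up to quotient by exactly the vector-bundle automorphisms and not more. Everything else — the Leibniz rule, $\mathcal D^2=0$, degree count — is a routine transport of structure along an $\Omega^\bullet(A)$-linear isomorphism. Since this is essentially Mehta's Theorem 4.5 recast without supermanifolds, I would also remark that the supermanifold dictionary (a graded vector bundle $\mathcal V$ corresponds to a graded manifold, an $A$-module to a $\cQ$-bundle over $A[1]$, etc.) gives an alternative proof, but the elementary homological-algebra argument above is self-contained given the definitions in this section.
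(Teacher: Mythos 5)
Your proposal is correct and follows exactly the route the paper itself sketches in the paragraph preceding Theorem~\ref{thm:Mehta} (the paper gives no full proof, deferring to Mehta's Theorem~4.5): transport $\rmd_V$ along a chosen $\Omega^\bullet(A)$-module isomorphism $V\cong\Omega(A,\mathcal V)$, and show that different choices differ by the composite of a vector-bundle automorphism of $\mathcal V$ with a gauge transformation, the latter factorization being exactly the point where degree-boundedness enters. Your explicit splitting $\varphi=\varphi_0\circ\varphi_+$ and the remark that ``isomorphism of representations up to homotopy'' must be understood to include conjugation by automorphisms of $\mathcal V$ (not only gauge transformations on the nose) correctly fill in the details the paper leaves implicit.
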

All the structures we will meet in this paper will be concentrated in non-negative degrees, or in other words $\rank_i(\mathcal{V}) =0$ when $i<0$. For example, a representation up to homotopy is said to be  an \emph{$m$-term representation up to homotopy} if the graded vector bundle $\mathcal{V}$ is concentrated in non-negative degrees and the degree is bounded by $m$.

\subsection{$\mathcal{VB}$-algebroids revisited}\label{subsec:VBalgebroids}
 Before we discuss the general setting of weighted Lie algebroids it will be illustrative to consider $\mathcal{VB}$-algebroids defined in terms of Lie algebroids equipped with a compatible regular homogeneity structures.
\begin{Definition}\label{def:VB-alg}
A $\mathcal{VB}$-algebroid is a Lie algebroid $\big(E,[-,-]_E, \rho_E  \big)$ equipped with a regular homogeneity structure
$$\rmh :  \R \times E \longrightarrow E,$$
that acts by Lie algebroid morphisms for all $t \in \R$.
\end{Definition}
\begin{remark}
Bursztyn, Cabrera and del Hoyo  \cite[Theorem~3.4.3]{Bursztyn:2014} establish  that the standard and much more complicated definition of a $\mathcal{VB}$-algebroid is equivalent to Definition \ref{def:VB-alg},  i.e.~to the existence of a compatible regular homogeneity structure on a standard Lie algebroid.
\end{remark}
From the definition of a $\mathcal{VB}$-algebroid we have a double vector bundle  in the sense of Pradines \cite{Pradines:1974}
\be\label{dvb}
\xymatrix{E \ar[r] \ar[d] & A \ar[d] \\
B \ar[r] & M}
\ee
such that the the vector bundles $ \tau : E \rightarrow B$ and $\pi: A \rightarrow M$ are Lie algebroids. The first structure is clear from the definition, the second Lie algebroid is defined by the map $\rmh_0 : E \rightarrow A$, which by definition must be a Lie algebroid morphism. We refer to the Lie algebroid $\big( A, [-,-] , \rho\big)$ defined by the projection $\rmh_0$ as the \emph{underlying degree zero Lie algebroid}.  We stress that here, and in what follows, $\rmh$ is the (regular) homogeneity structure corresponding to vector bundle $E \to A$. Nonetheless, $E$ possesses another (regular) homogeneity structure corresponding to vector bundle $E \to B$, and the axioms of the double vector bundle guarantee that $E$ can be coordinatised by coordinates which are homogeneous with respect to both homogeneity structures (see \cite[Theorem 3.2.]{Grabowski:2009}):

\begin{equation}\label{eq:hom_chart}
\big( \underbrace{x^a}_{(0,0)} ,~ \underbrace{y^i}_{(0,1)} , ~ \underbrace{z^\alpha}_{(1,0)},~ \underbrace{w^l}_{(1,1)}   \big).
\end{equation}
Here we have indicated the corresponding bi-weights, the first one being that associated with $\rmh$. So, $(x,y)$, $(x, z)$ and $(x, w)$ serve as coordinate systems on $A$, $B$ and $C :=  \ker (E \to A) \cap \ker (E \to B)$ the \emph{core} of the double vector bundle, respectively.  \par
As $E \to B$ is a Lie algebroid we have a Chevalley--Eilenberg--de Rham cochain complex $\big( \Omega^\bullet(E), \: \rmd_E  \big)$. The homogeneity structure $\rmh$ extends to an action of the multiplicative reals on $E$-forms via pull-back. This action is by cochain maps. In particular,  the subspace in $\Omega^\bullet (E)$ spanned by homogeneous cochains is a subcomplex and it is naturally bi-graded:  we have the standard cohomological degree of Lie algebroid forms, but also the degree associated with the homogeneity structure.  Similarly as above, we will fix the order of this bi-degree to be first the weight associated with $\rmh$ and then the cohomological degree. With this ordering the differential $\rmd_E$ is of bi-degree $(0,1)$.
Homogeneous $E$-forms of weight $1$ will be denoted by $\Omega^{(1, \bullet)}(E)$.  They are locally given by
\begin{align*}
&\Omega^{(1,0)}(E) \cong \Sec(B^*)  \ni z^\alpha \sigma_{\alpha}(x)\\
& \Omega^{(1,1)}(E)  \ni y^i z^\alpha \sigma_{\alpha i}(x)  + w^l \sigma_l(x)\\
& \Omega^{(1,2)}(E) \ni \frac{1}{2} y^i \wedge y^j z^\alpha \sigma_{\alpha ji}(x) + y^i \wedge w^l \sigma_{li}(x)\\
&    \hspace{60pt} \vdots\\
& \Omega^{(1,q)}(E) \ni \frac{1}{q!} y^{i_1} \wedge  \dots \wedge y^{i_q} z^\alpha \sigma_{\alpha i_q \dots i_1}(x) + \frac{1}{(q-1)!} y^{i_1} \wedge  \dots \wedge y^{i_{q-1}} \wedge w^l \sigma_{l i_{q-1} \dots i_1}(x) \\
&\Omega^{(1, q+1)}(E)  \ni \frac{1}{q!} y^{i_1} \wedge  \dots \wedge y^{i_q} \wedge w^l \sigma_{l i_q \dots i_1}(x)
\end{align*}
where $q = \rank(A)$. All higher degree forms of weight $1$ are identically zero. It  is clear that $\big( \Omega^{(1, \bullet)}(E), \rmd_E \big)$ is an $A$-module. This $A$-module is canonically associated with the $\mathcal{VB}$-algebroid structure on $E$.\par
Every double vector bundle is non-canonically isomorphic to a split double vector bundle, i.e., the Whitney sum of the two side vector bundles and the core vector bundle. Locally this was first established in \cite{Grabowski:2009} and then using a \v{C}ech cohomology argument one can extend this to a global isomorphism as suggested in \cite{Gracia-Saz:2009}. Alternatively, a minor modification of  Proposition \ref{prop:split}, and in particular the proof given in \cite{Bruce:2016}, yields the result.  Now suppose we have chosen such an isomorphism $E \simeq A \times_M B \times_M C$. Then associated with this choice there is an isomorphism of $C^\infty(M)$-modules
$$\Omega^{(1,\bullet)}(E) \overset{\varphi}{\longrightarrow} \Omega^\bullet(A, B^* \oplus C^* \big),$$
 where $B^\ast$ sits in degree $0$, and $C^\ast$ sits in degree $1$. We can then use the total degree to define a graded vector bundle $\mathcal{V} := B^* \oplus C^*$. Moreover, we can use the isomorphisms of modules as a gauge transformation and define a flat superconnection as
$$\mathcal{D} :=  \varphi \circ \rmd_E \circ \varphi^{-1}.$$
Thus, associated with any $\mathcal{VB}$-algebroid is, up to a non-canonical choice (i.e., a gauge transformation), a two term representation up to homotopy (see Definition \ref{def:RepHomot}). In the other direction, one can build a $\mathcal{VB}$-algebroid from a two term representation up to homotopy $\big(\Omega^\bullet(A, B^* \oplus C^*), \mathcal D \big)$, using $A \times_M B \times_M C$ as a double vector bundle and inverting the relation between $\mathcal{D}$ and $\rmd_E$ given above. Thus, up to these non-canonical choices, $\mathcal{VB}$-algebroids and two term representation up to homotopy are equivalent.\par
This result is of course not new. Gracia-Saz \& Mehta \cite{Gracia-Saz:2009} proved that isomorphisms classes of $\mathcal{VB}$-algebroids are in one-to-one correspondence with gauge equivalence classes of   two term representation up to homotopy.  The r\^ole of Lie algebroid modules as defined by Va\u{\i}ntrob \cite{Vaintrob:1997} in the super-language, was made very explicit by   Mehta \cite{Mehta:2014}. \par
From our perspective, the canonical $A$-module $\big( \Omega^{(1, \bullet)}(E), \rmd_E \big)$ associated with a $\mathcal{VB}$-algebroid is truly the fundamental generalisation of a representation of a Lie algebroid. Understanding the  $\mathcal{VB}$-algebroid as the `geometric model'  of a `higher representation' in this picture  requires no non-canonical choices.
\begin{example}[The adjoint module]
The $\mathcal{VB}$-algebroid in question here is the cotangent prolongation of a Lie algebroid $A$. As a double vector bundle we have $\sT^* A \simeq \sT^* A^*$. In homogeneous coordinates
$$\big( \underbrace{x^a}_{(0,0)} ,~ \underbrace{y^i}_{(0,1)} , ~ \underbrace{z_j}_{(1,0)},~ \underbrace{p_b}_{(1,1)}   \big),$$
elements of $\Omega^{(1,\bullet)}(\sT^*A)$ are of the form
$$\sigma =  \sum_{n=0}^q   \frac{1}{n!} y^{i_1} \wedge \dots \wedge y^{i_n} \sigma^j_{i_n \dots i_1}(x) z_j  +  \sum_{n=0}^q   \frac{1}{n!} y^{i_1} \wedge \dots \wedge y^{i_n} \sigma^a_{i_n \dots i_1}(x) p_a. $$
The differential associated with the cotangent prolongation of $A$ is given by
\begin{align*}
& \rmd_{\sT^*A} z_i = Q_i^a p_a + y^jQ_{ji}^k z_k,
&& \rmd_{\sT^*A} p_a = - y^i \frac{\partial Q_i^b}{\partial x^a} p_b - \frac{1}{2}y^i \wedge y^j \frac{\partial Q_{ji}^k}{\partial x^a}z_k.
\end{align*}
Via a non-canonical splitting we can make the identification $\mathcal{V} = A \oplus \sT M$ , where $A$ sits in degree $0$, and $\sT M$ sits in degree $1$,  and we recover the adjoint representation (cf. \cite{Abad:2012}).
\end{example}

\section{Weighted Lie algebroids and representations up to homotopy}\label{sec:WeightedAlgebroids}
\subsection{Weighted Lie algebroids}
 One can think of  a \emph{weighted Lie algebroid} as a Lie algebroid in the category of graded bundles or, equivalently, as a graded bundle in the category of Lie algebroids, see~\cite{Bruce:2014b,Bruce:2015,Bruce:2016}. Thus one should think of weighted Lie algebroids as Lie algebroids that carry an additional compatible grading. While there are several equivalent ways to define a weighted Lie algebroid, the most direct and suitable for our purposes is the following.
\begin{Definition}
A \emph{weighted Lie algebroid} of degree $k$ is a Lie algebroid $\big(E \to B, \; [-,-]_E , \; \rho_E  \big)$   equipped with a homogeneity structure of degree $k$ such that
$$ \rmh:  \,  \R \times  E \rightarrow  E,$$
acts as  a Lie algebroid morphism for all  $t \in \mathbb{R}$.
\end{Definition}
This definition of a weighted Lie algebroid is identical to the definition of a $\mathcal{VB}$-algebroid, except for the fact that we no longer insist that the homogeneity structure be regular. Note that a weighted Lie algebroid of degree one is precisely a $\mathcal{VB}$-algebroid.  For brevity, we will often denote a weighted Lie algebroid simply by $E$.
\begin{remark}
 In fact, dropping the regularity not only widens the class of objects we can consider, but also drastically simplifies working with them. Specifically, one does not need to check that some technical condition akin to regularity is preserved in the constructions. However, one may need to check that the degree of the homogeneity structure is preserved, but this is a local requirement that can usually be readily checked.   For example, the Lie theory relating weighed Lie algebroids and weighted Lie groupoids is technically simpler than just considering $\mathcal{VB}$-objects (see \cite{Bruce:2015} for details).
\end{remark}

Like $\mathcal{VB}$-algebroids, every weighted Lie algebroid is a double graded bundle, i.e.~a manifold equipped with two commuting homogeneity structures: one is $\rmh$, the other one is the regular homogeneity structure corresponding to the vector bundle $E \to B$.  In particular,  $E$ can be coordinatised by homogeneous coordinates of bi-weights $(i, j)$ where $0 \leq i \leq k$ and $0\leq j \leq 1$. That is we will order the bi-weight as the weight associated with the homogeneity structure $\rmh$ first and then the weight associated with the vector bundle structure. Let us define $ E_i :=  E_{(i,1)}$,  where $E_{(i,1)}$ is the reduction of $E$ to coordinates of degree $\le i$ with respect to the first component of the bi-weight, $1\leq i \leq k$. Furthermore, for notational ease we set $E_{0} =:  A$ and $E_{(i,0)} =:B_i$ and $B_0 =: M$. From the structure of a weighted Lie algebroid of degree $k$  we have the following tower
\begin{equation}\label{eq:graded_LA}
\xymatrix{E =  E_k \ar[r] \ar[d] & E_{k-1} \ar[r] \ar[d] & E_{k-2} \ar[r] \ar[d] & \cdots \ar[r] & E_1 \ar[r] \ar[d] & A \ar[d] \\
 B = B_k \ar[r]  & B_{k-1} \ar[r]  & B_{k-2} \ar[r]  & \cdots \ar[r] & B_1 \ar[r]  & M }
\end{equation}
which defines a series of weighted Lie algebroids each of one degree lower than the previous (see \cite{Bruce:2016} for details). Moreover, each vertical arrow defines a Lie algebroid via forgetting the weight.   In particular, the vector bundle $\pi : A \rightarrow M$ is a `standard' or `ungraded' Lie algebroid. Following our earlier nomenclature, $A$ is the \emph{underlying degree zero Lie algebroid} of the weighted Lie algebroid $E$.

\subsection{Weighted Lie algebroids, Lie algebroid modules and representations}
Given the relation between $\mathcal{VB}$-algebroids, Lie algebroid modules and two-term representations up to homotopy, it is natural to wonder what survives passing to weighted Lie algebroids.  In particular, what are  the analogues of the canonical $A$-modules associated with  $\mathcal{VB}$-algebroids and what are their relation to representations up to homotopy?\par
As a Lie algebroid $E$ has a perfectly well defined Chevalley--Eilenberg--de Rham cochain complex. Let us pick homogeneous coordinates on $E$ of the form
$$\big(  \underbrace{X^A_u }_{(u, 0)} ,\: \underbrace{Y^I_{u}}_{(u,1)} \big),$$
where $0 \leq u \leq k$. Lie algebroid forms are constructed locally by taking wedge products of $Y$'s and their coefficients are smooth functions in $X$. Thus, the notion of \emph{homogeneous $E$-forms} makes perfect sense. As for $\mathcal{VB}$-algebroids,  the action of the multiplicative reals on $E$-forms is just via pullback.
\begin{Definition}
A \emph{homogeneous $E$-form} of weight $i \in \mathbb N$ is an $E$-form $\omega \in \Omega^\bullet(E)$ such that  $\rmh^*_t \omega = t^i \: \omega.$
\end{Definition}
We denote by $\Omega^{(i,j)}(E)$ the space of homogeneous $j$-forms of weight $i$. The space of all homogeneous $E$-forms is  $\Omega^{(\bullet, \bullet)}(E) := \oplus_i \Omega^{(i, \bullet)}(E)$. Note that we in fact have a natural module structure over $\Omega^{(0, \bullet)}(E) \cong \Omega^\bullet(A)$. Furthermore, the differential $\rmd_E$ is of bi-weight $(0,1)$ and thus  $\Omega^{(i, \bullet)} (E)$ is a subcomplex in $(\Omega^\bullet (E), \rmd_E)$ for all $i$. \par
For completeness, the differential $\rmd_E$ can be locally expressed via
\begin{align*}
& \rmd_E X_u^A  = Y^I_{u-v}Q_I^A[v](X),\\
& \rmd_E Y^k_{u} = \frac{1}{2} Y^I_{w-v} \wedge Y^J_{u-w}Q^K_{JI}[v](X),
\end{align*}
where $Q_I^A[v] $ and $Q^K_{JI}[v]$ are the homogeneous parts of the structure functions of weight $v$  $(0 \leq v \leq k)$. Here we understand any $Y$ that seemingly negative weight as being set to zero. Moreover, it is not hard to see that reducing to $u=0$ gives the differential $\rmd_A$ associated with the underlying degree zero Lie algebroid $A$.\par
\begin{theorem}\label{thm:EModules}
Let $E \to B$ be a weighted Lie algebroid of degree $k$ and let $A \to M$ be its underlying degree zero Lie algebroid, as in (\ref{eq:graded_LA}). Then  each $\big( \Omega^{(i, \bullet)}(E), \rmd_E \big) $  with $1 \leq i \leq k$ is an $A$-module.
\end{theorem}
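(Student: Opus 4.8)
The plan is to verify the three conditions in Definition~\ref{def:LieModule} for each fixed weight $i$ with $1 \leq i \leq k$. The first two conditions are essentially formal. Since $\Omega^{(\bullet,\bullet)}(E)$ is the full algebra of homogeneous $E$-forms with $\rmd_E$ of bi-weight $(0,1)$, and since $\Omega^{(0,\bullet)}(E) \cong \Omega^\bullet(A)$ acts on $\Omega^{(i,\bullet)}(E)$ by multiplication (weights add: $0 + i = i$), the subspace $\Omega^{(i,\bullet)}(E)$ is automatically a graded $\Omega^\bullet(A)$-module. That it is a subcomplex, i.e.\ that $\rmd_E$ preserves $\Omega^{(i,\bullet)}(E)$, is already noted in the excerpt: $\rmd_E$ commutes with the pullback action $\rmh_t^*$ because $\rmh_t$ acts by Lie algebroid morphisms, so it preserves each homogeneous component. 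The Leibniz rule for $\rmd_V = \rmd_E|_{\Omega^{(i,\bullet)}(E)}$ with respect to the $\Omega^\bullet(A)$-action is just the restriction of the Leibniz rule for $\rmd_E$ on all of $\Omega^\bullet(E)$, noting that $\rmd_A$ is precisely $\rmd_E$ restricted to weight-zero forms (the $u=0$ reduction mentioned just before the theorem). So conditions (1) and (2) come for free.

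The substantive point is condition (3): that $\Omega^{(i,\bullet)}(E)$, as an $\Omega^\bullet(A)$-module, is isomorphic to $\Omega^\bullet(A) \otimes_{C^\infty(M)} \Sec(\mathcal{V}_i)$ for some graded vector bundle $\mathcal{V}_i \to M$ concentrated in non-negative degrees. The strategy is to invoke the splitting theorem. By Proposition~\ref{prop:split} (more precisely the refinement for the double/multi-graded situation, proved exactly as in \cite{Bruce:2016} as indicated for $\mathcal{VB}$-algebroids) we may choose a (non-canonical) splitting of the double graded bundle $E$ as a Whitney sum over $M$ of the model vector bundles $\bar\sv$ of the fibrations in (\ref{eq:graded_LA}), separated according to the two weights. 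Concretely, a splitting provides homogeneous coordinates $(x^a, Y^I_u)$ in which the first-weight-$u$, vector-bundle-weight-$1$ generators $Y^I_u$ are global linear fibre coordinates on a vector bundle $C_u \to M$, for $1 \leq u \leq k$ (here $C_1 = A$). A general weight-$i$ $E$-form is then, after choosing this splitting, a sum of terms that are wedge products of the $Y^I_u$ with total first-weight equal to $i$, with coefficients pulled back from $M$. Organising these by the cohomological degree contributed by the weight-zero generators $Y^I_1$ (which play the role of $A$-form generators), one reads off an isomorphism of $C^\infty(M)$-modules, hence of $\Omega^\bullet(A)$-modules,
$$\Omega^{(i,\bullet)}(E) \;\cong\; \Omega^\bullet(A) \otimes_{C^\infty(M)} \Sec(\mathcal{V}_i), \qquad \mathcal{V}_i := \bigoplus_{\substack{u_1 + \cdots + u_r = i \\ u_1,\dots,u_r \geq 2}} \textstyle\bigwedge^\bullet\!\big(\text{the relevant }C_{u_j}\big),$$
where the precise description of $\mathcal{V}_i$ as an exterior-product combination of the bundles $C_2,\dots,C_i$ (with appropriate degree assignments recording how many weight-$\geq 2$ generators appear and in which antisymmetrised slots) is the bookkeeping one carries out by homogeneity; the degree bound $\leq i-1$ on $\mathcal{V}_i$ is forced since each generator $Y^I_u$ with $u \geq 2$ contributes at least $2$ to the weight but only $1$ to the cohomological degree. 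One then checks that this $C^\infty(M)$-linear isomorphism is the identity on coefficients and hence $\Omega^\bullet(A)$-linear, which is immediate from the construction.

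The main obstacle is the combinatorial description of $\mathcal{V}_i$ and making sure the splitting-dependent isomorphism is genuinely $\Omega^\bullet(A)$-linear and not merely $C^\infty(M)$-linear: one must be careful that the weight-zero $Y^I_1$'s are treated as $A$-form generators uniformly across all terms, so that left multiplication by $\Omega^\bullet(A)$ matches on both sides. This is exactly parallel to the $\mathcal{VB}$-algebroid case worked out in Section~\ref{subsec:VBalgebroids}, where $\Omega^{(1,\bullet)}(E) \cong \Omega^\bullet(A, B^* \oplus C^*)$; here the degree-$k$ generalisation simply produces, for each weight $i$, a richer graded vector bundle $\mathcal{V}_i$ assembled from the higher core bundles. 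I would present the argument by first disposing of (1) and (2) in one short paragraph, then spending the bulk of the proof on exhibiting the splitting and the induced module isomorphism for (3), possibly illustrating the pattern for small $i$ (e.g.\ $i=1,2$) before stating the general form, and finally remarking — as for $\mathcal{VB}$-algebroids — that although $\mathcal{V}_i$ and the isomorphism depend on the splitting, the $A$-module $\big(\Omega^{(i,\bullet)}(E), \rmd_E\big)$ itself is completely canonical.
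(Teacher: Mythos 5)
Your overall strategy coincides with the paper's: dispose of conditions (1) and (2) of Definition \ref{def:LieModule} formally, then establish the freeness condition (3) by splitting $E$ via Proposition \ref{prop:split} and reading off $\mathcal V_i$ in homogeneous coordinates. However, the decisive step of (3) contains a concrete error. You describe a general weight-$i$ $E$-form as a wedge product of the bi-weight-$(u,1)$ generators $Y^I_u$ ``with coefficients pulled back from $M$''. The coefficients are functions on $B$, not on $M$, and the positive-weight fibre coordinates of the graded bundle $B \to M$ (those of bi-weight $(u,0)$ with $u \geq 1$, which are absent from your coordinate list) also contribute to the total weight $i$. Already $\Omega^{(i,0)}(E) = \mathcal{A}^i(B)$, the weight-$i$ polynomial functions on $B$, is nonzero whenever $B \neq M$, whereas your description forces it to vanish. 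As a result your $\mathcal V_i$, assembled only from exterior powers of core bundles, omits all the symmetric-power factors: the correct graded bundle is a sum of terms $\rmS^{l_1}\bar\sv(B_1)^\ast \otimes \cdots \otimes \rmS^{l_k}\bar\sv(B_k)^\ast \otimes \wedge^{m_1}C_1^\ast \otimes \cdots \otimes \wedge^{m_k}C_k^\ast$ over all indices with $l_1 + m_1 + 2(l_2+m_2) + \cdots + k(l_k+m_k) = i$, graded by $j = m_1 + \cdots + m_k$. Your own benchmark contradicts your formula: for $i=1$ it yields a trivial $\mathcal V_1$ (there are no partitions of $1$ into parts $\geq 2$), while Section \ref{subsec:VBalgebroids}, which you cite as the parallel case, gives $\mathcal V = B^\ast \oplus C^\ast$ --- the $B^\ast$ summand is precisely the contribution you are losing.

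A related slip: each core generator has bi-weight $(u,1)$ with $u \geq 1$, so it contributes at least $1$ (not $2$) to the weight and exactly $1$ to the cohomological degree; hence the degree bound on $\mathcal V_i$ is $i$, not $i-1$, which is what produces the $(i+1)$-term representations up to homotopy asserted in the corollary following the theorem. The repair is to note, as the paper does, that after splitting each $\bar\sv(E_l)$ is a double vector bundle over $A$ and $\bar\sv(B_l)$ with some core $C_l \to M$, so that $\Omega^\bullet(E) \cong C^\infty(B) \otimes \Omega^\bullet(A) \otimes \Omega^\bullet(C_1 \oplus \cdots \oplus C_k)$ with tensor products over $C^\infty(M)$, and then to extract the total weight-$i$ component from \emph{both} the polynomial-function factor $C^\infty(B)$ and the core factor, not from the core factor alone.
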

\begin{proof}
Consulting Definition \ref{def:LieModule}, it is clear from the constructions that part 1. and part 2. are satisfied. Only part 3., which is essentially a `local freeness' condition, needs careful examination. First, we use Proposition \ref{prop:split} to present $B$ in the form
\[
B \cong \bar\sv(B_k) \oplus_M \bar\sv(B_{k-1}) \oplus_M \dots \oplus_M \bar\sv(B_1) \,,
\]
and $E$ in the form
\[
E \cong \bar\sv(E_k) \oplus_A \bar\sv(E_{k-1}) \oplus_A \dots \oplus_A \bar\sv(E_1) \,.
\]
Again we stress that these isomorphisms are in general non-canonical. However, it is easy to see that $\bar\sv (E_l)$ is a vector bundle over $\bar\sv (B_l)$   $( 1 \leq l \leq k)$  in a canonical way. Even more,  
\begin{equation}\label{DVB}
\xymatrix{\bar\sv (E_l) \ar[r] \ar[d] & A \ar[d] \\
 \bar\sv (B_l)  \ar[r] & M}
\end{equation}
is a double vector bundle, where we have a shift in the weight of the fibre coordinates in accordance with Proposition \ref{prop:split}. This follows  directly from the permissable changes of local coordinates on  $E$.  Hence 
\[
\bar\sv (E_l) \cong A \oplus_M \bar\sv (B_l) \oplus_M C_l
\]
for some vector bundle $C_l \to M$ (which is of course the core of (\ref{DVB})). It follows that
\[
\Omega^\bullet (E) \cong C^\infty (B) \otimes \Omega^\bullet (A) \otimes \Omega^\bullet (C_1 \oplus \cdots \oplus C_k),
\]
where the tensor products are over $C^\infty (M)$. Hence
\begin{equation}\label{cong}
\Omega^{(i, \bullet)} (E) \cong \Omega^\bullet (A) \otimes  \Sec (\mathcal V^{(i)})
\end{equation}
where 

\[
\mathcal V^{(i)} = \sum_\star \rmS^{l_1} \bar \sv (B_1)^\ast \otimes \cdots \otimes \rmS^{l_k} \bar \sv (B_k)^\ast \otimes \wedge^{m_1} C_1^\ast \otimes \cdots \otimes \wedge^{m_k} C_k^\ast.
\]
and the sum $\sum_\star$ is over all $l_1, \ldots, l_k, m_1, \ldots, m_k$ such that
\[
l_1 + m_1 + 2(l_2 + m_2) + \cdots + k (l_k + m_k) = i .
\]
Notice that, if we understand $\mathcal V^{(i)}$ as a graded vector bundle by putting 
\[
\mathcal V^{(i)} = \bigoplus_j \mathcal V^{(i, j)},
\]
for
\[
\mathcal V^{(i, j)} = \sum_{\star, \, m_1 + \cdots + m_k = j} \rmS^{l_1} \bar \sv (B_1)^\ast \otimes \cdots \otimes \rmS^{l_k} \bar \sv (B_k)^\ast \otimes \wedge^{m_1} C_1^\ast \otimes \cdots \otimes \wedge^{m_k} C_k^\ast,
\]
then isomorphism (\ref{cong}) is an isomorphism of graded modules. This concludes the proof. 
\end{proof}

\begin{remark}\label{rem:LUCA}
Any element in $\Omega^{(i, \bullet)}(E)$ is locally given by at most a (wedge) product of $i$ coordinates of bi-weight not equal to $(0,0)$ or $(0,1)$ and then an element of $\Omega^\bullet(A)\: (\cong \Omega^{(0,\bullet)}(E))$. Note that any element from $\Omega^{(i,j)}(E)$ with $j >i $ must contain a factor from  $\Omega^\bullet(A)$. Now consider the ideal $\mathcal{J}(E)$ in $\Omega^{(\bullet, \bullet)}(E)$ generated by $\Omega^\bullet(A)$. We denote the $\Omega^{(i,j)}(E)$ component of this ideal as $\mathcal{J}^{(i,j)}(E)$.  We then define
$$W^{(i,j)}(E) := \Omega^{(i,j)}(E)\, / \, \mathcal{J}^{(i,j)}(E),$$
the quotient $C^\infty (M)$-module.  Note that $W^{(i,j)}(E)$ vanishes if $j>i$. Then considering all $j \in \mathbb{N}$ we have
$$W^i(E) :=  \bigoplus_jW^{(i,j)}(E) = \bigoplus_{j\leq i}W^{(i,j)}(E). $$
In this way we obtain a finite direct sum of  $C^\infty(M)$-modules. The proof of Theorem \ref{thm:EModules} now shows that $W^{(i,j)} (E) \cong \Sec (\mathcal V^{(i,j)})$, so that $W^i (E) \cong  \Sec (\mathcal V^{(i)})$, and $\Omega^{(i, \bullet)} (E) \cong \Omega^\bullet (A) \otimes W^i (E)$ where the tensor product is over $C^\infty (M)$.
\end{remark}

\begin{example}\label{exp:E3}
 To illustrate the above theorem, consider a weighted Lie algebroid of degree $2$. We assume once for all that we fixed isomorphisms $W^{(i,j)} (E) \cong \Sec (\mathcal V^{(i,j)})$ and $\Omega^{(i, \bullet)} (E) \cong \Omega^\bullet (A) \otimes W^i (E)$ as in Remark  (\ref{rem:LUCA}). The Lie algebroid structure is somewhat superfluous for this illustration as it is the graded structure that is important for constructing the $C^\infty(M)$-modules.  The two modules that we need to  construct are
 \begin{align*}
 & W^1(E) = W^{(1,0)}(E) \oplus W^{(1,1)}(E),\\
 & W^2(E) = W^{(2,0)}(E) \oplus W^{(2,1)}(E) \oplus W^{(2,2)}(E),
 \end{align*}
 as all higher weight modules are trivial. Let us choose some local coordinates on $E_3$
$$\big( \underbrace{x^a}_{(0,0)}, \:  \underbrace{z^\alpha}_{(1,0)},\: \underbrace{u^\delta}_{(2,0)} \: ; \:  \underbrace{y^i}_{(0,1)},\: \underbrace{w^l}_{(1,1)}; \: \underbrace{v^p}_{(2,1)}\big). $$

In other words, the $x$ are coordinates on $M$, the $z, u, y, w, v$ are linear fiber coordinates on $\bar \sv (B_1), \bar \sv (B_2), A, C_1, C_2$ respectively. We can then construct a local basis using these coordinates. Typical elements are then (locally) of the form
\begin{align*}
\sigma[1] &= \:  z^\alpha \sigma(x, y) + w^l\sigma_l(x, y) & \in W^1(E),\\
\sigma[2] &=  \:  u^\delta \sigma_\delta(x,y) + \frac{1}{2}z^\alpha z^\beta  \sigma_{\beta \alpha}(x,y) \\ & \quad +  \: v^p\sigma_p(x,y) + z^\gamma w^l \sigma_{l \gamma}(x,y) + \frac{1}{2} w^m \wedge w^n \sigma_{nm}(x,y) & \in   W^2(E).
\end{align*}

Next we should consider what happens under changes of coordinates. Symbolically we write
\begin{align*}
& z \mapsto z, && u \mapsto u + z^2,\\
& y \mapsto y, && w \mapsto w + yz,\\
& v \mapsto v + zw + yu + y z^2,
\end{align*}
where we have neglected to write indices, numerical factors,  wedge products  and dependency on $x^a$. The above are the general form of permissible changes of homogeneous coordinates on $E_2$. Note that disregarding the linear dependence on $y$ of these coordinate changes, the (local) basis elements for each homogeneous component of $W^1(E)$ and $W^2(E)$ transforms as linear combinations of each other. Thus, the local constructions of $W^1(E)$ and $W^2(E)$ is well defined and can then be globalised via gluings.
\end{example}
Note that the construction of the $A$-modules $\Omega^{(i,\bullet)}(E)$ is canonical and requires no choices to be made. The non-canonical aspect is in finding an isomorphism as $C^\infty(M)$-modules to modules of the form $\Omega^\bullet(A,\mathcal{V})$, and this can be done via a choice of homogeneous coordinates on $E$. However, the definition of an $A$-module requires only that such isomorphisms exist, not that that they be fixed as part of the structure. Once an isomorphism is chosen we obtain a representation up to homotopy of $A$. Thus a direct consequence of Theorem \ref{thm:EModules} is the following.
\begin{corollary}
Let $E$ be a weighted Lie algebroid of degree $k\geq 1$, and let $A$ be the underlying degree zero Lie algebroid. Then associated with the canonical $A$-modules $\big(\Omega^{(i,\bullet)}, \rmd_E\big)$ $(1\leq i \leq k)$ is (up to isomorphism classes) a series of representations up to homotopy of $A$  with $2, 3, \cdots, k+1$ terms, respectively.
\end{corollary}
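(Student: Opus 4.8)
The plan is to deduce this corollary almost immediately from the combination of Theorem \ref{thm:EModules} and Mehta's Theorem \ref{thm:Mehta}, with the only genuine content being a bookkeeping argument about the degree range of the graded vector bundle $\mathcal V^{(i)}$. First I would invoke Theorem \ref{thm:EModules} to obtain, for each $i$ with $1 \leq i \leq k$, the canonical $A$-module $\big(\Omega^{(i,\bullet)}(E), \rmd_E\big)$. By Theorem \ref{thm:Mehta}, the isomorphism class of this $A$-module corresponds to an isomorphism class (equivalently, a gauge equivalence class) of representations up to homotopy of $A$; concretely, choosing an isomorphism of $\Omega^\bullet(A)$-modules $\Omega^{(i,\bullet)}(E) \cong \Omega^\bullet(A) \otimes_{C^\infty(M)} \Sec(\mathcal V^{(i)})$ (which exists by part 3 of Definition \ref{def:LieModule}, and is exhibited explicitly in the proof of Theorem \ref{thm:EModules} and in Remark \ref{rem:LUCA}) transports $\rmd_E$ to a flat superconnection $\mathcal D$ on $\Omega(A, \mathcal V^{(i)})$, and different choices give gauge equivalent ones.

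The remaining step is to count terms. I would use the explicit description of $\mathcal V^{(i)} = \bigoplus_j \mathcal V^{(i,j)}$ from the proof of Theorem \ref{thm:EModules}: the $j$-th graded piece $\mathcal V^{(i,j)}$ is spanned by the wedge/symmetric products with $m_1 + \cdots + m_k = j$ subject to $l_1 + m_1 + 2(l_2+m_2) + \cdots + k(l_k + m_k) = i$. Since all $l_a, m_a \geq 0$ and $j = \sum m_a$, we have $j \leq \sum(l_a + m_a) \leq \sum a(l_a+m_a) = i$, so $\mathcal V^{(i,j)} = 0$ for $j > i$ and for $j < 0$; hence $\mathcal V^{(i)}$ is concentrated in degrees $0, 1, \ldots, i$, i.e.\ it has degree bounded by $i$. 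In the terminology recalled after Theorem \ref{thm:Mehta}, this means $\big(\Omega^{(i,\bullet)}(E), \rmd_E\big)$ gives an $(i+1)$-term representation up to homotopy of $A$. Letting $i$ range over $1, \ldots, k$ then yields representations up to homotopy with $2, 3, \ldots, k+1$ terms respectively, which is exactly the claim.

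The only point deserving a little care — and the closest thing to an obstacle — is making sure the degree count is sharp in the right direction and matches the "$m$-term" convention: one must check both that $\mathcal V^{(i)}$ vanishes in negative degrees (immediate, since $m_a \geq 0$) and that it is bounded above by $i$, and then translate "concentrated in degrees $0$ through $i$" into "$(i+1)$-term" according to the definition stated in the text. I would also note in passing that these representations up to homotopy are only defined up to gauge equivalence, since the module isomorphism in part 3 of Definition \ref{def:LieModule} is not canonical — this is why the statement reads "up to isomorphism classes" — whereas the underlying $A$-modules $\big(\Omega^{(i,\bullet)}(E), \rmd_E\big)$ themselves are canonically attached to $E$. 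No further computation is needed.
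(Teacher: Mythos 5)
Your proposal is correct and follows essentially the same route as the paper: the corollary is presented there as a direct consequence of Theorem \ref{thm:EModules} together with the (non-canonical) choice of module isomorphism and Mehta's Theorem \ref{thm:Mehta}, and the term count you extract from the constraint $l_1+m_1+2(l_2+m_2)+\cdots+k(l_k+m_k)=i$ is exactly the observation recorded in Remark \ref{rem:LUCA} that $W^{(i,j)}(E)$ vanishes for $j>i$. Nothing is missing.
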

It must be noted that the Lie algebroid modules, and so the representations up to homotopy, that we obtain are very particular. The Lie algebroid modules consist of certain polynomials in homogeneous functions on a weighted Lie algebroid. Examining Example \ref{exp:E3}, the reader should note that $W^2(E)$ contains terms involving $z^\gamma w^l$, $z^\alpha z^\beta$ and  $w^m \wedge w^l$. Moreover, in general such terms are necessary due to the permissible changes of local coordinates on $E_2$. Written somewhat heuristically, ``$W^1(E)\wedge W^1(E) \subset W^2(E)$''. Similar statements hold when considering weighted Lie algebroids of higher degree.   Thus, there is no way that our constructions can be used to obtain a general $k+1$-term representation up to homotopy of a given Lie algebroid $A$. This is quite different to the very special case of $\mathcal{VB}$-algebroids and $2$-term representations up to homotopy. Essentially,  the fact that the Lie algebroid modules obtained from  $\mathcal{VB}$-algebroids are linear in certain homogeneous coordinates means that one can describe all $2$-term representations up to homotopy geometrically in this way.

\subsection{Some simple examples}
We now turn our attention to some illustrative examples. Given that weighted Lie algebroids are plentiful, see \cite{Bruce:2014b,Bruce:2016}, many further intricate examples can be constructed. 

\begin{example}
In \cite{Bruce:2016} the notion of a weighted Lie algebra was defined as a weighted Lie algebroid for which $M = \{ \textnormal{pt}\}$. This means that the underlying degree zero Lie algebroid is in fact a Lie algebra, we denote this vector space as $\mathfrak{g}$. However, we still have a `double structure'
\begin{equation*}
\xymatrix{E = E_k \ar[r] \ar[d] & \mathfrak{g} \ar[d] \\
 B = B_k  \ar[r] & \{\textnormal{pt} \}}
\end{equation*}
We employ homogeneous coordinates
$$\big( \underbrace{z^\alpha_w}_{(w,0)}, \underbrace{y^i}_{(0,1)} , \underbrace{v_w^p}_{(w,1)}\big),$$
 $(1\leq w \leq k)$ on $E_k$.  The corresponding differential is  given by
 \begin{align*}
 & \rmd_E z^\alpha_w = y^i Q_i^\alpha[w](z)+ v^p_{w-u}Q_p^\alpha[u](z),\\
 & \rmd_E y^i = \frac{1}{2}y^l \wedge y^k Q_{kl}^i,\\
 & \rmd_E v^p_w = \frac{1}{2}y^l \wedge y^k Q_{kl}^p[w](z) + y^i \wedge v^q_u Q_{qi}^p[w-u](z) + \frac{1}{2} v^q_{u-v} \wedge v^r_{w-u}Q_{rq}^p
[v](z), 
\end{align*}
where we have employed our earlier convention of setting terms of weight outside there specified range to vanish. Elements of  $\Omega^{(i,\bullet)}(E)$, where   $1\leq i \leq k$, are of the local form
$$ \sigma[i] = \sum_{\substack{w_{1}+ \cdots + w_{n} \\ + v_{1}+ \cdots + v_{m} = i}} \frac{1}{n! m!}v_{w_{1}}^{p_{1}} \wedge \cdots \wedge v_{w_{n}}^{p_{n}}  \: z_{v_{1}}^{\alpha_{1}}\cdots z_{v_{m}}^{\alpha_{m}} \sigma_{\alpha_{m}\cdots \alpha_{1}   p_{n} \cdots p_{1 }} (y),$$
where we allow $n$ and $m$ (but not both) to be zero. We have the non-canonical isomorphism
$$\Omega^{(\bullet, \bullet)}(E) \cong \Omega^\bullet(\mathfrak{g}) \otimes \A(B) \otimes \Omega^\bullet(C_1 \oplus C_2 \dots \oplus C_k), $$
where $\A(B)$ is the polynomial algebra on the graded vector space $B$ and $C_l$ are the core vector spaces  of 
\begin{equation*}
\xymatrix{\bar\sv (E_l) \ar[r] \ar[d] & \mathfrak{g} \ar[d] \\
 \bar\sv (B_l)  \ar[r] & \{\textnormal{pt} \}}
\end{equation*}
Thus we identify each vector space $W^i(E)$ with the space of  anti-symmetric forms on the direct sum of the cores that take their values in the polynomial algebra on the graded vector space $B$ that are of weight $i$.
\end{example}

\begin{example}\label{E7}
Continuing Example \ref{exp:E3}, in homogeneous local coordinates the differential $\rmd_E$, and so the flat superconnection associated with the representations up to homotopy  of $A$, is defined by the standard differential $\rmd_A$ and
\begin{align*}
\rmd_E z^\alpha = {}& z^\beta y^i Q_{i \beta}^\alpha(x) + w^l Q_l^\alpha(x),\\
\rmd_E u^\delta  = {}&u^\epsilon y^i  Q_{i \epsilon }^\delta(x) + \frac{1}{2} z^\alpha z^\beta y^i Q_{i \beta \alpha }^\delta(x) + z^\alpha w^l Q_{l \alpha}^\delta(x) + v^pQ_p^\delta(x),\\
 \rmd_E w^l  = {}&y^i \wedge w^m Q_{mi}^l(x) + \frac{1}{2}y^i \wedge y^j z^\alpha Q_{\alpha ji}^l(x),\\
 \rmd_E v^p = {}&y^i \wedge v^q Q_{qi}^p(x) + \frac{1}{2} y^i \wedge y^j u^\delta Q_{\delta ji}^p(x)  + \frac{1}{4} y^i \wedge y^j
 z^\alpha z^\beta Q_{\beta \alpha ji}^p(x)\\
  &+ y^i \wedge w^l z^\alpha Q_{\alpha l i}^p(x) + \frac{1}{2}w^l \wedge w^m Q_{ml}^p(x).
 \end{align*}
 Thus, up to isomorphism classes we have associated with $E_2$
 \begin{enumerate}
 \item a \emph{two} term representation up to homotopy of $A$;
 \item a \emph{three} term representation up to homotopy of $A$.
 \end{enumerate}
\end{example}
\begin{example}[Tangent bundles of graded bundles]\label{exp:TGradedBundle} Consider a graded bundle $\tau: F_{k} \longrightarrow M$ of degree $k$. The tangent bundle  $\sT F_{k}$ is naturally a weighted Lie algebroid of degree $k$. We can specify this weighted Lie algebroid structure via the canonical de Rham differential on $\Omega^\bullet(F_k)$ (see \cite{Bruce:2016} for details). In natural local coordinates
$$(\underbrace{x^{a}}_{(0,0)}, \:  \underbrace{z_{w}^{\alpha}}_{(w, 0)},\: \underbrace{\rmd x^{b}}_{(0,1)} , \:\underbrace{\rmd z^\beta_{w}}_{(w,1)}),$$
where $1\leq w \leq k$ the canonical de Rham can be expressed as
\begin{align*}
&\rmd_{\sT F}x^a = \rmd x^a, && \rmd_{\sT F} z^\alpha_w = \rmd z^\alpha_{w}.
\end{align*}
Clearly we have that the underlying degree zero Lie algebroid is the tangent bundle of $M$ equipped with its canonical Lie algebroid structure.  Elements of  $\Omega^{(i,\bullet)}(\sT F)$, where   $1\leq i \leq k$, are of the local form
$$ \sigma[i] = \sum_{\substack{w_{1}+ \cdots + w_{n} \\ + v_{1}+ \cdots + v_{m} = i}} \frac{1}{n! m!}\rmd z_{w_{1}}^{\alpha_{1}} \wedge \cdots \wedge \rmd z_{w_{n}}^{\alpha_{n}}  \: z_{v_{1}}^{\beta_{1}}\cdots z_{v_{m}}^{\beta_{m}} \sigma_{\beta_{m}\cdots \beta_{1}   \alpha_{n} \cdots \alpha_{1 }} (x, \rmd x),$$
where we allow $n$ and $m$ (but not both) to be zero. Thus, we can identify the $C^{\infty}(M)$-modules $ W^i(\sT F)$ as the modules whose elements are the sums of vertical differential forms  on $ \tau: F_{k} \rightarrow M$ that are of weight $i$.
\end{example}

\begin{example}[Prolongations of graded bundles]
Given a Lie algebroid $A$ and a graded bundle $F_k$ both over the same base manifold $M$ one can construct the weighted Lie algebroid of degree $k$ given by
$$ E = \mathcal{T}^{A}F_{k} :=  \left \{ (\mathsf{u}, \mathsf{v}) \in A \times_M \sT F_{k} ~ | ~  \rho(\mathsf u )  = \sT \tau_{k}(\mathsf v)\right\}.$$
Lie algebroids of this kind are known in the literature as \emph{pull-back Lie algebroids} \cite{Mackenzie2005} or \emph{prolongations of fibred manifolds with respect to a Lie algebroid}, see \cite{Bruce:2016} and the references given therein. This weighted Lie algebroid structure is  easiest to describe via the differential $\rmd_E$. With this in mind, natural homogeneous coordinates on $E$ are of the form
$$\big(\underbrace{x^a}_{(0,0)},\: \underbrace{z^\alpha_w}_{(w,0)},\: \underbrace{y^j}_{(0,1)}, \: \underbrace{\rmd z^\beta_{w}}_{(w,1)}    \big),$$
where $1\leq w \leq k$. Then we can express $\rmd_E$ locally as
\begin{align*}
& \rmd_E x^a = y^i Q_i^a(x), && \rmd_E z^\alpha_w = \rmd z^\alpha_{w}, & \rmd_E y^k = \frac{1}{2} y^i \wedge y^j Q_{ji}^k(x).
\end{align*}
Elements of  $\Omega^{(i,\bullet)}(E)$, where   $1\leq i \leq k$, are of the local form
$$ \sigma[i] = \sum_{\substack{w_{1}+ \cdots + w_{n} \\ + v_{1}+ \cdots + v_{m} = i}} \frac{1}{n! m!}\rmd z_{w_{1}}^{\alpha_{1}} \wedge \cdots \wedge \rmd z_{w_{n}}^{\alpha_{n}}  \: z_{v_{1}}^{\beta_{1}}\cdots z_{v_{m}}^{\beta_{m}} \sigma_{\beta_{m}\cdots \beta_{1}   \alpha_{n} \cdots \alpha_{1 }} (x,  y),$$
where we allow $n$ and $m$ (but not both) to be zero. Here we understand the components $\sigma_{\beta_{m}\cdots \beta_{1}   \alpha_{n} \cdots \alpha_{1 }} (x,  y)$ to be (collections of) elements from $\Omega^\bullet(A)$. Thus, we can identify the $C^{\infty}(M)$-modules $ W^i(E)$ as the modules whose elements are the sums of vertical differential forms  on $ \tau: A \rightarrow M$ that are of weight $i$.
This is almost exactly the same situation as Example \ref{exp:TGradedBundle}, but now the underlying weight zero Lie algebroid is $A$ and not the tangent bundle $\sT M$.
\end{example}
\subsection{A supergeometric interpretation}
 For readers familiar with the language of supergeometry we provide a supergeometric interpretation of our observations. First of all, the Va\u{\i}ntrob's interpretation of a Lie algebroid on a vector bundle $A$ is a homological vector field $Q$ on $\Pi A$ (where $\Pi$ is the fibre parity reversing functor) of degree 1, $Q^2=0$. The Grassmann algebra $\Omega^\bullet(A)$ is interpreted as the algebra of smooth functions on $\Pi A$ and $Q=\rmd_A$. For instance, the Lie algebroid structure (\ref{Q}) corresponds to the homological vector field 
$$\rmd_{A} = \zx^{i}Q_{i}^{a}(x) \frac{\partial}{\partial x^{a}} + \frac{1}{2} \zx^{i}\zx^{j}Q_{ji}^{k}(x)\frac{\partial}{\partial \zx^{k}}\,,$$
where $(x^a,\zx^i)$ are coordinates on $\Pi A$ associated to a basis $(s_i)$ of local sections.\par 
$A$-modules are represented by graded (super)vector bundles $\cE$ over $\Pi A$ equipped with a homological vector field of degree 1 which projects onto $\rmd_A$. In the supergeometric interpretation, a weighted Lie algebroid is a homological vector field $Q$ on a superized vector bundle $\Pi E$ equipped with a compatible $\N$-grading, which is of bi-degree $(0,1)$. For a double vector bundle (\ref{dvb}) we consider the vector bundle $\Pi_BE\to \Pi A$, now in the category of supermanifolds, with homogeneous (super)coordinates:
$$(\underbrace{x^{a}}_{(0,0)}, \underbrace{z^{\alpha}}_{(1,0)}, \underbrace{\zx^{i}}_{(0,1)} , \underbrace{\theta^{l}}_{(1,1)}).$$
Note that the second component of the bi-weight corresponds to the Grassmann parity of the coordinates. Then in these coordinates the  homological vector field associated with the $\mathcal{VB}$-structure is given locally by
\begin{eqnarray}\label{eqn:Q}
Q &=& \zx^{i}Q_{i}^{a}\frac{\partial}{\partial x^{a}} + \frac{1}{2}\zx^{i}\zx^{j}Q_{ji}^{k}\frac{\partial}{\partial \zx^{k}} + \left(z^\beta \zx^i Q_{i \beta}^\alpha + \theta^l Q_l^\alpha \right)\frac{\partial}{\partial z^{\beta}} \\
\nonumber &+& \left ( \zx ^i \theta^m Q_{mi}^l + \frac{1}{2} \zx^i  \zx^j z^\alpha Q_{\alpha ji}^l \right)\frac{\partial}{\partial \theta^{l}},
\end{eqnarray}
where the coefficients are functions of $x$. Sections of $\Pi_{B}E$ locally have the form
$$\sigma = z^{\alpha}\sigma_{\alpha}(x, \zx) + \theta^{l}\sigma_{l}(x,\zx).$$
Thus we identify the graded bundle in the definition of a two-term representation up to homotopy with $\mathcal{V} \simeq B^{*} \oplus \Pi C^{*}$, where $C$ is the core of $E$. Hence, up to isomorphism classes, we have a one-to-one correspondence between $\mathcal{VB}$-algebroids and two term representations up to homotopy.\par 
This procedure then extends to the more general setting directly. In particular, on $\Pi E$ we employ local coordinates 
$$\big( \underbrace{X_u^A}_{(0,u)  }, \:  \underbrace{\Theta_u^I}_{(u,1)} \big),$$
 $(0 \leq u \leq k)$ where the second component of the bi-weight again corresponds to the Grassmann  parity of the coordinates. In all local expressions for the $A$-modules and differentials one then replaces wedge products of $Y$'s with supercommutative products of $\Theta$'s. For example, the Chevalley--Eilenberg--de Rham differential is then understood as a homological vector field on $\Pi E$ given locally as
$$Q = \Theta^{I}_{u-u'+1}Q_{I}^{\alpha}[u'](X) \frac{\partial}{\partial X^{\alpha}_{u} }  + \frac{1}{2} \Theta^{I}_{u''-u'+1} \Theta^{J}_{u-u''+1}Q_{JI}^{K}[u'](X)\frac{\partial}{\partial \Theta_{u+1}^{K} } ,$$
 where $Q_{I}^{\alpha}[u']$ and $Q_{IJ}^{K}[u']$ are the homogeneous parts of the structure functions of degree~$u'$. In the notation employed here, any  $\Theta$ with seemingly zero or negative total weight  is set to zero. All the constructions and examples that have so far appeared in this paper can be re-formulated within this supergeometric framework. 

\section{The weighted van Est theorem}\label{sec:VanEst}
\subsection{The van Est map}

Let $\mathcal G \rightrightarrows M$ be a Lie groupoid and let $A := A(\mathcal G) \to M$ be its Lie algebroid. We denote by $\mathcal G^{(p)}$ the manifold of $p$ composable arrows. Let $(C^\bullet (\mathcal G), \delta)$ be the complex of smooth (normalized) groupoid cochains of $\mathcal G$, i.e.~smooth maps $c : \mathcal G^{(\bullet)} \to \mathbb R$, such that $c (g_1, \ldots, g_p) = 0$ whenever $g_i = 1$ for some $i$. Recall that there is a \emph{van Est} differential graded algebra map
\[
\mathsf{VE}: (C^\bullet (\mathcal G), \delta) \to (\Omega^\bullet (A), \rmd_A)
\]
defined as follows. For a cochain $c \in C^p (\Gamma)$, and all sections $\alpha_1, \ldots, \alpha_p$ of $A \to M$:
\[
\mathsf{VE} (c) (\alpha_1, \ldots, \alpha_p) = \sum_{\sigma \in S_p} (-)^\sigma R_{\alpha_{\sigma(1)}} \cdots R_{\alpha_{\sigma (p)}} (c).
\]
Here
\[
R_\alpha : C^\bullet (\mathcal G) \to C^{\bullet - 1} (\mathcal G)
\]
is the map defined by:
\[
(R_\alpha c)(g_1, \ldots, g_{p-1}) = \frac{\rmd}{\rmd\epsilon}|_{\epsilon = 0} c(\phi_\epsilon^\alpha \circ \mathsf s (g_1), g_1, \ldots, g_{p-1})
\]
where $\{ \phi_\epsilon^\alpha \}$ is the flow of the right invariant vector field $\overrightarrow \alpha$ on $\mathcal G$ corresponding to section $\alpha$ of $A \to M$, and $\mathsf s : \mathcal G \to M$ is the source map.\par 
It follows that there is a graded algebra map in cohomology, also denoted $\mathsf{VE}$:
\begin{equation}\label{eq:VE}
\mathsf{VE} : H^\bullet (C(\mathcal G), \delta) \to H^\bullet (\Omega^\bullet(A), \rmd_A).
\end{equation}
The \emph{van Est theorem} of Crainic \cite{Crainic:2003}, asserts that, if the source fibres of $\mathcal G$ are $p_0$-connected, then (\ref{eq:VE}) is an isomorphism in degree $p \leq p_0$ and it is injective in degree $p_0+1$. For more details on groupoid cohomology and the van Est map the reader should consult \cite{Crainic:2003,Li-Bland:2015} and reference therein.

\subsection{Extending to the weighted category}
Recently, Cabrera \& Drummond \cite{Cabrera:2016} proved a refinement of van Est theorem for \emph{homogeneous cochains} of a $\mathcal{VB}$-groupoid. Let us briefly review their result. Recall that a $\mathcal{VB}$-groupoid is a \emph{vector bundle object} in the category of Lie groupoid, i.e.~it is a diagram
\[
\begin{array}{c}
\xymatrix{ \Gamma \ar[r] \ar@<2pt>[d] \ar@<-2pt>[d]& \mathcal G \ar@<2pt>[d] \ar@<-2pt>[d] \\
B \ar[r] & M}
\end{array}
\]
also written $(\Gamma, B; \mathcal G, M)$, where
\begin{itemize}
\item the columns are Lie groupoids,
\item the rows are vector bundles,
\item all the vector bundle structure maps are Lie groupoid maps.
\end{itemize}
There is a vastly simpler definition of a $\mathcal{VB}$-groupoid, (see \cite{Bursztyn:2014}). Namely, a $\mathcal{VB}$-groupoid is the same as a Lie groupoid $\Gamma \rightrightarrows B$ equipped with a  homogeneity structure $\rmh : \mathbb R \times \mathcal G \to \mathcal G$ such that
\begin{enumerate}
\item $\rmh_t$ is a groupoid map for all $t$,
\item $(\Gamma, \rmh)$ is a degree $1$ weighted bundle (in other words, $\rmh$ is a regular homogeneity structure).
\end{enumerate}
Then $\mathcal G = \rmh_0 (\Gamma)$ and $M = \rmh_0 (B)$. If we drop the second condition, we get the far more general notion of a \emph{weighted Lie groupoid} \cite{Bruce:2015}. It can be checked that the Lie algebroid of a $\mathcal{VB}$-groupoid is a $\mathcal{VB}$-algebroid. More generally, the Lie algebroid of a weighted Lie groupoid is a weighted Lie algebroid  of the same degree, see \cite{Bruce:2015}.\par
Now, let $(\Gamma, B; \mathcal G, M)$ be a $\mathcal{VB}$-groupoid, and let $(E, B; A, M)$ be its $\mathcal{VB}$-algebroid. We denote by $\rmh$ both the (regular) homogeneity structures on $\Gamma$, and $E$. They are intertwined by the Lie functor. \par
It easily follows from the $\mathcal{VB}$-groupoid axioms, that $\Gamma^{(p)}$ is a vector (sub)bundle (of $\Gamma \times \cdots \times \Gamma$) over $\mathcal G^{(p)}$, for all $p$, hence $\rmh$ induces an obvious action $\rmh^\ast : \mathbb R \times C^\bullet (\Gamma) \to C^\bullet (\Gamma)$ of multiplicative reals on groupoid cochains $C^\bullet (\Gamma)$  (via pull-back).  According to Cabrera and Drummond, a cochain $c \in C^\bullet (\Gamma)$ is \emph{homogeneous of weight $k \in \mathbb N_0$} if $\rmh_t^\ast (c) = t^k \cdot c$ for all $t \in \mathbb R$. For instance, homogeneous cochains of weight $0$ are actually cochains on $\mathcal G$, homogeneous cochains of weight $1$ are linear cochains, and so on. Cabrera \& Drummond proved the following  theorem
\begin{theorem}\label{theor:VE} \quad
\begin{enumerate}
\item The space $C^\bullet_{k\text{-hom}} (\Gamma)$ of homogeneous cochains of weight $k$ is a $C^\bullet (\mathcal G)$ submodule of $C^\bullet (\Gamma)$ and a subcomplex.
\item The van Est map $\mathsf{VE} : (C^\bullet (\Gamma), \delta) \to (\Omega^\bullet (E), \rmd_E)$ restricts to a differential graded module map $$\mathsf{VE}_{k\text{-hom}} : (C_{k\text{-hom}}^\bullet (\Gamma), \delta) \to (\Omega^{(k, \bullet)} (E), \rmd_E)$$ covering the differential graded algebra map $\mathsf{VE} : (C^\bullet (\mathcal G), \delta) \to (\Omega^\bullet (A), \rmd_A)$, for all $k$.
\item If the source fibres of $\mathcal G$ are $p_0$-connected, then $\mathsf{VE}_{k\text{-hom}}$ is an isomorphism in degree $p \leq p_0$, and it is injective in degree $p_0 +1$, for all $k$.
\end{enumerate}
\end{theorem}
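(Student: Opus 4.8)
The plan is to reduce the weighted statement to the $\mathcal{VB}$-case handled by Cabrera \& Drummond by means of the fibration tower~(\ref{eq:graded_LA}), or — more efficiently — by a direct inspection showing that nothing essential changes in their argument when regularity is dropped. Concretely, I would first establish part (1): observe that $\Gamma^{(p)} \subset \Gamma \times_{M} \cdots \times_{M} \Gamma$ is a \emph{weighted} subbundle over $\mathcal G^{(p)}$ (this follows from the weighted Lie groupoid axioms exactly as in the $\mathcal{VB}$-case, since the groupoid structure maps intertwine $\rmh$), so that $\rmh^{\ast}$ acts on $C^{\bullet}(\Gamma)$ and the space $C^{\bullet}_{k\text{-hom}}(\Gamma)$ of weight-$k$ cochains is the $t^{k}$-eigenspace. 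That $\delta$ commutes with $\rmh^{\ast}$ (because the face maps $\mathcal G^{(p)}\to\mathcal G^{(p-1)}$ lift to weighted-bundle maps $\Gamma^{(p)}\to\Gamma^{(p-1)}$) gives the subcomplex property, and the $C^{\bullet}(\mathcal G)$-module structure is the weight-zero part acting by multiplication; both are purely formal and identical to~\cite{Cabrera:2016}.

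For part (2), the key point is that the van Est map is \emph{grading-preserving} in the appropriate sense. I would show that each operator $R_{\alpha}$, for $\alpha$ a section of $A\to M$, maps $C^{\bullet}_{k\text{-hom}}(\Gamma)$ to $C^{\bullet-1}_{k\text{-hom}}(\Gamma)$: indeed the right-invariant vector field $\overrightarrow{\alpha}$ on $\Gamma$ associated to a weight-zero section is $\rmh$-invariant (it projects to $\overrightarrow{\alpha}$ on $\mathcal G$ and is homogeneous of weight $0$ because $\alpha$ lives over $M=B_{0}$), so its flow $\phi^{\alpha}_{\epsilon}$ commutes with $\rmh_{t}$, whence $R_{\alpha}$ intertwines $\rmh^{\ast}$ on the nose. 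Summing over $S_{p}$ then gives $\mathsf{VE}(C^{p}_{k\text{-hom}}(\Gamma))\subseteq \Omega^{(k,p)}(E)$. The module-map and covering statements over $\mathsf{VE}:(C^{\bullet}(\mathcal G),\delta)\to(\Omega^{\bullet}(A),\rmd_{A})$ are then automatic from the weight-zero part. Here one uses the identification of $\Omega^{(k,\bullet)}(E)$ as an $A$-module from Theorem~\ref{thm:EModules}; no splitting of $E$ is needed since $\mathsf{VE}$ is defined intrinsically.

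For part (3), the cleanest route is to deduce it from the already-known isomorphism statements, and this is where I expect the only real work to lie. One option is to filter by weight and run the tower~(\ref{eq:graded_LA}): each step $\Gamma_{i}\rightrightarrows B_{i}$ to $\Gamma_{i-1}\rightrightarrows B_{i-1}$ is, locally, an \emph{affine} rather than linear fibration, so it is not literally a $\mathcal{VB}$-groupoid; but after choosing a (non-canonical) splitting as in Proposition~\ref{prop:split} one can present $\Gamma$ as an iterated Whitney sum of $\mathcal{VB}$-groupoids over $\mathcal G$ with shifted weights, reducing the weight-$k$ piece to a finite sum of tensor/symmetric/exterior products of linear pieces, each of which is controlled by Theorem~\ref{theor:VE}\,(3) of Cabrera \& Drummond. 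The isomorphism in degree $\leq p_{0}$ and injectivity in degree $p_{0}+1$ are preserved under such finite sums and tensor constructions over the fixed base complex $(C^{\bullet}(\mathcal G),\delta)\to(\Omega^{\bullet}(A),\rmd_{A})$ because the connectivity hypothesis on the source fibres of $\mathcal G$ is the \emph{same} for every intermediate groupoid (all of them have the same underlying $\mathcal G$). A more robust alternative — and the one I would actually write up — is simply to rerun the homotopy-operator proof of Cabrera \& Drummond verbatim: their contracting homotopy on $C^{\bullet}(\Gamma)$, built from the $p_{0}$-connectedness of the source fibres of $\mathcal G$, is constructed fibrewise over $\mathcal G^{(\bullet)}$ and manifestly commutes with $\rmh^{\ast}$, hence restricts to each weight-$k$ eigenspace; regularity of $\rmh$ is never used in that construction.

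The main obstacle is thus bookkeeping rather than a new idea: one must verify carefully that every auxiliary map in the Cabrera--Drummond proof (the homotopy operators, the identification of $\mathsf{VE}$ with a Perturbation-Lemma zig-zag, and the degeneracy/normalization maneuvers) is $\rmh$-equivariant, i.e.\ of bi-weight $(0,\ast)$, so that it respects the weight grading; once that is checked, splitting the whole argument into its weight-$k$ components yields the three assertions with no change in the conclusions. I would remark, as the authors hint, that this is precisely the ``minor adjustment'' to~\cite{Cabrera:2016} advertised in the introduction, and that the only genuinely weighted input is the decomposition $\Omega^{(k,\bullet)}(E)\cong\Omega^{\bullet}(A)\otimes W^{k}(E)$ from Remark~\ref{rem:LUCA}, which tells us which $A$-module the weight-$k$ van Est map lands in.
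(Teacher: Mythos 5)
Your part (1) and the ``robust alternative'' you propose for part (3) match the paper's proof: the paper establishes that $\Gamma^{(p)}$ is a weighted bundle over $\mathcal G^{(p)}$ (invariance inside $\Gamma\times\cdots\times\Gamma$ plus the fibered-product argument for the degree), and for part (3) it uses exactly the homogenization projectors $P_{k\text{-hom}}(\omega)=\tfrac{1}{k!}\tfrac{d^k}{dt^k}|_{t=0}(\rmh_t^\ast\omega)$, checks they are cochain maps commuting with $\mathsf{VE}$, and invokes the Homological Lemma of Cabrera--Drummond. Your first suggested route for (3) (splitting $\Gamma$ into Whitney sums of $\mathcal{VB}$-groupoids and arguing that isomorphism/injectivity in fixed degrees survives tensor and symmetric-power constructions) is not what the paper does and would need substantial justification you do not supply; but since you explicitly prefer the second route, this is not fatal.

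There is, however, a genuine gap in your part (2). The map being restricted is $\mathsf{VE}:(C^\bullet(\Gamma),\delta)\to(\Omega^\bullet(E),\rmd_E)$, so to evaluate $\mathsf{VE}(c)$ one must insert sections $e_1,\ldots,e_p$ of $E\to B$ and use the operators $R_{e_i}$ built from the right-invariant vector fields $\overrightarrow{e_i}$ on $\Gamma$. A section $\alpha$ of $A\to M$ does not determine a right-invariant vector field on $\Gamma$, so your operators $R_\alpha$ are not the ones appearing in $\mathsf{VE}_{k\text{-hom}}$, and showing that $R_\alpha$ preserves weight-$k$ cochains for weight-zero $\alpha$ cannot establish that $\mathsf{VE}(c)$ lies in $\Omega^{(k,\bullet)}(E)$: the weight of an $E$-form is detected precisely by how it transforms under insertion of \emph{all} sections of $E\to B$, on which $\rmh$ acts nontrivially. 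The correct statement, which is what the paper proves, is the full equivariance $\mathsf{VE}(\rmh_t^\ast c)=\rmh_t^\ast\,\mathsf{VE}(c)$, where on the right-hand side $(\rmh_t^\ast\omega)(e_1,\ldots,e_p)=\rmh_t^\ast\big(\omega(\rmh_t e_1,\ldots,\rmh_t e_p)\big)$ with $\rmh_t e:=\rmh_t\circ e\circ\rmh_t^{-1}$; this reduces to the identity $R_e(\rmh_t^\ast c)=\rmh_t^\ast R_{\rmh_t e}(c)$ --- note that the section is \emph{transformed}, not fixed --- which follows from equivariance of the source map and the fact that $\rmh_t$ intertwines the flows of $\overrightarrow{e}$ and $\overrightarrow{\rmh_t e}$ (with the $t=0$ case obtained by continuity). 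Your argument needs to be rewritten along these lines; as it stands it only addresses the weight-zero direction and does not prove that the restriction lands in $\Omega^{(k,\bullet)}(E)$.
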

It is natural to expect that Theorem \ref{theor:VE} can be generalized to arbitrary weighted Lie groupoids. This is indeed the case as proved below. So, let $(\Gamma \rightrightarrows B, \rmh)$ be an arbitrary weighted Lie groupoid, and let $(E \to B, \rmh)$ be its weighted Lie algebroid. Homogeneous cochains of $\Gamma$ can be defined precisely as above (see below for more details), and Theorem \ref{theor:VE} carries over to this general situation.
\begin{proof}[Proof of Theorem \ref{theor:VE} for a generic weighted Lie groupoid $\Gamma$]
Cabrera and Drummond proof extends without any problem to the present case. We report it here for completeness. First of all we have to make sense of the first statement. So, let $(\Gamma \rightrightarrows B, \rmh)$ be a weighted Lie groupoid and let $(E \to B, \rmh)$ be its weighted Lie algebroid. Put $\mathcal G = \rmh_0 (\Gamma)$ and $A = \rmh_0 (E)$. The diagonal action of multiplicative reals on $\Gamma \times \cdots \times \Gamma$ turns it into a weighted bundle over $\mathcal G \times \cdots \times \mathcal G$, of the same degree as $\Gamma$. As source and target maps are equivariant w.r.t.~the homogeneity structures, it easily follows that $\Gamma^{(p)}$ is a weighted bundle over $\mathcal G^{(p)}$, of the same degree as $\Gamma$, for all $p$. Indeed $\Gamma^{(p)}\subset \Gamma \times \cdots \times \Gamma$ is an invariant submanifold wrt homogeneity structure on $\Gamma \times \cdots \times \Gamma$, so that the degree of $\Gamma^{(p)}$ does not exceed that of $\Gamma$. On another hand, $\Gamma^{(p)}$ is a multiple fibered product of $\Gamma$ along submersions, so the degree of $\Gamma$ cannot exceed that of $\Gamma^{(p)}$ (we thank the anonymous referee for this remark). In particular, there is an action $\rmh^\ast$ of multiplicative reals on groupoid cochains $C^\bullet (\Gamma)$:
\[
\rmh^\ast : \mathbb R \times C^\bullet (\Gamma) \to C^\bullet (\Gamma).
\]
Namely, for $t \in \mathbb R$ and $c \in C^p (\Gamma)$, $\rmh^\ast_t c : \Gamma^{(p)} \to \mathbb R$ is defined by
\[
\rmh^\ast_t c (\gamma_1, \ldots, \gamma_p) := c (\rmh_t \gamma_1, \ldots, \rmh_t \gamma_p).
\]
A cochain $c \in C^\bullet (\Gamma)$ is \emph{homogeneous of weight k} if $\rmh^\ast_t c = t^k \cdot c$ for all $t \in \mathbb R$, and, as in the statement, we denote by $C^\bullet_{k\text{-hom}} (\Gamma)$ the space of homogeneous cochains of weight $k$. It is clear that the product of a weight $k$ and a weight $l$ cochain is a weight $k+l$ cochain. As weight $0$ cochains identify with smooth groupoid cochains on $\mathcal G$, it follows that $C^\bullet_{k\text{-hom}} (\Gamma)$ is a $C^\bullet (\mathcal G)$-module in a natural way. Additionally, from the explicit formula for the differential $\delta : C^\bullet (\Gamma ) \to C^\bullet (\Gamma)$ and the fact that $\rmh_t$ is a groupoid map for all $t$, it immediately follows that $\rmh_t^\ast : (C^\bullet (\Gamma ), \delta) \to (C^\bullet (\Gamma), \delta)$ is a cochain map. In particular, $C^\bullet_{k\text{-hom}} (\Gamma)$ is a subcomplex in $(C^\bullet (\Gamma), \delta)$ and a differential graded submodule over the differential graded algebra $(C^\bullet (\mathcal G), \delta)$. This proves the first item in the statement. \par
For the second item, it is enough to show that the van Est map $\mathsf{VE} : (C^\bullet (\Gamma), \delta) \to (\Omega^\bullet (E), \rmd_E)$ \emph{preserves the homogeneity structures}, i.e.~it is equivariant w.r.t. to the actions of multiplicative reals on $C^\bullet (\Gamma)$ and $\Omega^\bullet (E)$:
\begin{equation}\label{eq:VEw}
\mathsf{VE} (\rmh_t^\ast c) = \rmh_t^\ast \mathsf{VE}(c)
\end{equation}
for all $t$ and all groupoid cochains $c$. We will prove (\ref{eq:VEw}) for $t \neq 0$. Then, by continuity, it will also hold for $t = 0$. For an $E$-form $\omega \in \Omega^p (E)$, the pull-back $\rmh_t^\ast \omega$ looks as follows:
\[
(\rmh_t^\ast \omega)(e_1, \ldots, e_p) = \rmh_t^\ast \big( \omega (\rmh_t e_1, \ldots, \rmh_t e_p) \big), \quad t \neq 0
\]
where $\rmh_t e$ is the section of $E \to B$ defined by $\rmh_t e = \rmh_t \circ e \circ \rmh_t^{-1}$. So it is enough to check that
\[
R_{e} (\rmh_t^\ast c) = \rmh_t^\ast R_{\rmh_t e} (c).
\]
But this immediately follows from the equivariance of $\mathsf s : \Gamma \to B$, and the simple fact that $\rmh_t : \Gamma \to \Gamma$ intertwines the flows of $\overrightarrow e$ and $\overrightarrow{\rmh_t e}$. This concludes the proof of the second item.\par
For the third item, we use the same argument as Cabrera \& Drummond: for every $k$ there are projectors $P_{k\text{-hom}} : C^\bullet (\Gamma) \to C^\bullet (\Gamma)$ and $P_{k\text{-hom}} : \Omega^\bullet (E) \to \Omega^\bullet (E)$ defined by

\[
P_{k\text{-hom}} (\omega) = \frac{1}{k!} \frac{d^k}{dt^k} |_{t = 0}(h_t^\ast \omega),
\]
which makes sense for both $\omega \in C^\bullet (\Gamma)$ and $\omega \in \Omega^\bullet (E)$. The  $P_{k\text{-hom}}$ enjoy the following properties: 
\begin{enumerate}
\item the image of $P_{k\text{-hom}}$ consists of weight $k$ homogeneous cochains (resp.~$E$-forms),
\item $P_{k\text{-hom}}$ is a cochain map,
\item $P_{k\text{-hom}}$ commutes with the van Est map: $P_{k\text{-hom}} \circ \mathsf{VE} = \mathsf{VE} \circ P_{k\text{-hom}}$.
\end{enumerate}
The last item then follows from the Homological Lemma in \cite[p.~9]{Cabrera:2016}. For $P_{k\text{-hom}}$ we can use exactly the same (standard, \emph{homogenization}) formula as Cabrera and Drummon, see \cite[Equation (2.1)]{Cabrera:2016}. The properties of $P_{k\text{-hom}}$ then immediately follows from the axioms of weighted groupoids, and the fact that the Lie functor maps the homogeneity structure on $\Gamma$ to the homogeneity structure on $E$. This concludes the proof of the theorem.
\end{proof}

\begin{remark}
In the proof of Theorem \ref{theor:VE} there are several homogeneity structures playing a role: first of all those on $\Gamma$ and $E$, but also those on $\Gamma \times \cdots \times \Gamma$ and $\Gamma^{(p)}$. In the original proof of Cabrera and Drummond it is important to show that the latter are regular homogeneity structures. On the other hand, working with generic graded objects highlights that what is really important in the proof is the action of multiplicative reals on the appropriate cochain complexes and the equivariance of the appropriate (van Est) map.
\end{remark}

\section*{Acknowledgements}
The authors cordially thank the anonymous referee for their valuable comments and suggestion that have undoubtedly served to improve the presentation of this paper. LV thanks AJB and JG for their hospitality during his visit to IMPAN, Warsaw, where this project was undertaken. LV is a member of the GNSAGA of INdAM. The research JG was funded by the Polish National Science Centre grant under the contract number DEC-
2012/06/A/ST1/00256.


\end{document}